\newtheorem{theorem}{Theorem}[section]
\newtheorem{corollary}[theorem]{Corollary}
\newtheorem{lemma}[theorem]{Lemma}
\theoremstyle{definition}
\newtheorem{defi}[theorem]{Definition}
\newtheorem{definition}[theorem]{Definition}
\theoremstyle{remark}
\newcommand{\inhyper}[1] {\tilde{#1}}
\def \Sym {\operatorname{Sym}}
\def \Bun {\operatorname{Bun}}
\def \div {\operatorname{div}}
\def \PP {\mathbb P}
\newcommand*{\ra}{\rightarrow}
\newcommand*{\xra}{\xrightarrow}
\begin{document}

\title[Bounds for the stalks of perverse sheaves]{Bounds for the stalks of perverse sheaves in characteristic $p$ and a conjecture of Shende and Tsimerman }
\author{Will Sawin}
\address{Department of Mathematics \\ Columbia University \\ New York, NY 10027}
\email{sawin@math.columbia.edu}

\maketitle

\vspace{-15pt}{\centering\emph{With an appendix by Jacob Tsimerman}

 }
\vspace{15pt}

\begin{abstract} We prove a characteristic $p$ analogue of a result of Massey which bounds the dimensions of the stalks of a perverse sheaf in terms of certain intersection multiplicities of the characteristic cycle of that sheaf. This uses the construction of the characteristic cycle of a perverse sheaf in characteristic $p$ by Saito. We apply this to prove a conjecture of Shende and Tsimerman on the Betti numbers of the intersections of two translates of theta loci in a hyperelliptic Jacobian. This implies a function field analogue of the Michel-Venkatesh mixing conjecture about the equidistribution of CM points on a product of two modular curves. \end{abstract}

\section{Introduction}

Massey used the polar multiplicities of a Lagrangian cycle in the cotangent bundle of a smooth complex manifold to bound the Betti numbers of the stalk of a perverse sheaf at a point \cite[Corollary 5.5]{Massey}. In this paper, we prove an analogous result in characteristic $p$. We use the characteristic cycles for constructible sheaves on varieties of characteristic p defined by \citet[Definition 5.10]{saito1}, building heavily on work of \citet{Beilinson}. Before stating our main theorem, let us define the polar multiplicities. 

\begin{defi} We say a closed subset, or algebraic cycle, on a vector bundle is \emph{conical} if it is invariant under the $\mathbb G_m$ action by dilation of vectors. \end{defi}

\begin{defi} For a vector bundle $V$ on a variety $X$, let $\mathbb P(V) = \operatorname{Proj} (\operatorname{Sym}^* (V^\vee ))$ be its projectivization, whose dimension $\dim X + \operatorname{rank} V-1$, which is equivalent to the quotient of the affine bundle $V$, minus its zero section, by $\mathbb G_m$. For a conical cycle $C$ on $V$, let $\mathbb P(C)$ the quotient of $C$, minus its intersection with the zero section, by $\mathbb G_m$. \end{defi} 

\begin{defi}\label{polar-multiplicity} Let $X$ be a smooth variety of dimension $n$. Let $C$ be a conical cycle on the cotangent bundle $T^* X$ of $X$ of dimension $n$ and let $x$ be a point on $X$.

For $0 \leq i< \dim X$, let $V$ be a sub-bundle of $T^* X$ defined over a neighborhood of $x$, with rank $i+1$. such that the fiber $V_x$ is a general point of the Grassmanian of $i+1$-dimensional subspaces of $(T^* X)_x$. 

Then we define \emph{the $i$th polar multiplicity of $C$ at $x$},  $\gamma^i_C(x)$, as the multiplicity of the pushforward $\pi_* ( \mathbb P(C) \cap \mathbb P(V))$ at $x$, where $\pi: \mathbb P( T^* X) \to X$ is the projection. 

We define {\emph the $n$th polar multiplicity of $C$ at $x$} to be the multiplicity of the zero-section in $C$. \end{defi}

Here $\pi_*( \mathbb P(C) \cap \mathbb P(V))$ is interpreted as an algebraic cycle, and the multiplicity of an algebraic cycle at a point is the appropriate linear combination of the multiplicities of its irreducible components. We will check that this multiplicity is independent of the choice of $V$ with $V_x$ sufficiently general in Section \ref{equivalences} below.

Our result is as follows:

\begin{theorem}\label{first-Massey-intro} Let $X$ be a smooth variety over a perfect field $k$ and let $\ell$ be a prime invertible in $k$. Let $K$ be a perverse sheaf of $\mathbb F_\ell$-modules on $X$. 

Then $\dim_{\mathbb F_\ell} \mathcal H^{-i}(K)_x$ is at most $i$th polar multiplicity of $CC(K)$ at $x$. \end{theorem}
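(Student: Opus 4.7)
My plan is to induct on $n=\dim X$. The base case $n=0$ is immediate: $K$ is a vector space concentrated in degree $0$, $CC(K) = (\dim K)\cdot[T^*_X X]$, and $\gamma^0_{CC(K)}(x) = \dim K$.

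For the inductive step, I pick a smooth function $f$ on a neighborhood of $x$ with $f(x)=0$ and $df(x)$ sufficiently generic in $T^*_xX$, and let $\iota : H=\{f=0\}\hookrightarrow X$ denote the resulting (smooth near $x$) hypersurface. For such generic $f$, the Beilinson--Saito framework guarantees that both $\psi_f K[-1]$ and $\phi_f K[-1]$ are perverse on $H$, and that $\phi_f K[-1]$ is supported at $\{x\}$; being perverse and point-supported, it is then a skyscraper concentrated in cohomological degree $0$. The distinguished triangle $\phi_f K[-1] \to \iota^*K \to \psi_f K \xra{+1}$ yields the stalk-wise long exact sequence
\[
\cdots \to \mathcal H^{-i}(\phi_f K[-1])_x \to \mathcal H^{-i}(K)_x \to \mathcal H^{-i}(\psi_f K)_x \to \mathcal H^{-i+1}(\phi_f K[-1])_x \to \cdots.
\]
For $i\geq 1$ the leftmost term vanishes, so $\dim \mathcal H^{-i}(K)_x \leq \dim \mathcal H^{-i}(\psi_f K)_x = \dim \mathcal H^{-(i-1)}(\psi_f K[-1])_x$, which the inductive hypothesis applied to $\psi_f K[-1]$ on $H$ bounds by $\gamma^{i-1}_{CC(\psi_f K[-1])}(x)$. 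For $i=0$, perversity of $\psi_f K[-1]$ kills $\mathcal H^0(\psi_f K)_x$, and Saito's Milnor-type index formula identifies $\dim \mathcal H^0(\phi_f K[-1])_x$ with the intersection multiplicity $(CC(K),df)$ at $(x,df(x))$.

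It remains to show each of these estimates is at most $\gamma^i_{CC(K)}(x)$. For $i=0$ this is the content of the equivalences of Section~\ref{equivalences} applied to the generic line $\langle df\rangle$. For $i\geq 1$, I take the generic rank-$(i+1)$ subbundle $V$ in the definition of $\gamma^i_{CC(K)}(x)$ to contain $\langle df\rangle$---still generic, since $df$ is generic and $V$ is generic among subbundles containing it---so that $V/\langle df\rangle \subset T^*X|_H/\langle df\rangle \cong T^*H$ is a generic rank-$i$ subbundle usable in computing $\gamma^{i-1}_{CC(\psi_f K[-1])}(x)$. Using the Beilinson--Saito description of $CC(\psi_f K[-1])$ as (an appropriate version of) the push-forward to $T^*H$ of $CC(K)\cap T^*X|_H$, one identifies $\gamma^{i-1}_{CC(\psi_f K[-1])}(x) = \gamma^i_{CC(K)}(x)$. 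The main obstacle is this last bookkeeping step: pinning down the precise formula for $CC(\psi_f K[-1])$ in characteristic $p$ and correctly commuting intersection, projection, and push-forward in cotangent bundles so the polar multiplicities match, together with verifying that the several genericity conditions on $f$ (for perversity of $\psi_f,\phi_f$, for the support of $\phi_f K[-1]$, and for Saito's characteristic-cycle formulas) can be achieved simultaneously.
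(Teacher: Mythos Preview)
Your outline is the same strategy the paper uses: slice by a generic hypersurface, apply nearby/vanishing cycles, and induct. But the two items you flag at the end as ``bookkeeping'' are the actual content of the proof, and one of them does not go through as you have written it.

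First, a small correction in the $i=0$ case: Saito's Milnor formula gives $-\operatorname{dimtot}(R\Phi_f K)_x=(CC(K),df^*\omega)_{T^*X,x}$, and in characteristic $p$ the total dimension includes the Swan conductor. So you only get $\dim\mathcal H^0(\phi_fK[-1])_x\le\operatorname{dimtot}(R\Phi_f K)_x$, not equality. That inequality is exactly what the theorem needs, but it is not the identification you claim.

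The more serious gap is the formula for $CC(\psi_fK[-1])$. Saito's restriction theorem (\cite[Theorem~7.6]{saito1}) gives $CC(\iota^*K)=\iota^!CC(K)$ for properly $C$-transversal $\iota$, and since $\psi_fK=\iota^*K$ away from $x$ you get $CC(\psi_fK[-1])=\iota^!CC'(K)$ away from $x$. But at $x$ the two cycles can differ by an unknown multiple of $[T^*_xH]$, and this multiple contributes directly to $\gamma^0_{CC(\psi_fK[-1])}(x)$, i.e.\ to the $i=1$ step of your induction. There is no local formula that pins it down. The paper computes it (and shows it is zero, Lemma~\ref{nearby-formula}) by comparing Euler characteristics of the special and generic fibers via the global index formula; this requires the map to $\mathbb A^1$ to be \emph{proper}. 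That is why the paper first compactifies $X$ projectively and replaces your local smooth $f$ by a pencil of hypersurface sections $q\colon\overline X\to\mathbb P^1$.

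Finally, once one is forced into the projective setting, the paper uses conics rather than hyperplanes because, in the transversality count of Lemma~\ref{genericity-lemma}, one needs the derivative of the hypersurface at a point $y\neq x$ to vary independently of the constraint that the hypersurface pass through $x$. Degree one does not provide this independence; degree two does. So the genericity verification is not routine, and the choice of $f$ is constrained in a way your local formulation does not anticipate.
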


The analogous statement follows for perverse $\ell$-adic sheaves by noting that their Betti numbers are bounded by the Betti numbers of their mod $\ell$ incarnations.

We have a corollary that describes when these Betti numbers must vanish, which may admit a more direct proof:

\begin{corollary}\label{vanishing-intro} Let $X$ be a smooth variety over a perfect field $k$ and let $\ell$ be a prime invertible in $k$. Let $K$ be a perverse sheaf of $\mathbb F_\ell$-modules on $X$. Then $\mathcal H^{-i} (K)_x$ vanishes for \[ -i> \dim SS(K)_x - \dim X \] where $(SS(K))_x$ is the fiber of the singular support of $K$ over $x$. \end{corollary}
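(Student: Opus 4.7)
The plan is to derive the corollary directly from Theorem \ref{first-Massey-intro} by showing that the $i$th polar multiplicity $\gamma^i_{CC(K)}(x)$ vanishes whenever $-i > \dim SS(K)_x - \dim X$.

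Set $d = \dim SS(K)_x$ and suppose $i < \dim X - d$. Since the support of the cycle $CC(K)$ is $SS(K)$, the support of $\mathbb P(CC(K)) \cap \mathbb P(V)$ is contained in $\mathbb P(SS(K)) \cap \mathbb P(V)$ for any sub-bundle $V$. Taking fibers over $x$, $\mathbb P(SS(K))_x$ is the projectivization of $SS(K)_x \setminus \{0\}$; it has dimension $d-1$ inside $\mathbb P((T^*X)_x) \cong \mathbb P^{\dim X - 1}$ (or is empty when $d=0$).

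For a generic rank-$(i+1)$ sub-bundle $V$ as in Definition \ref{polar-multiplicity}, the fiber $\mathbb P(V)_x$ is a generic linear subspace of $\mathbb P((T^*X)_x)$ of dimension $i$. A standard Kleiman-style transversality argument---using the transitive action of the general linear group on the Grassmannian of $(i+1)$-dimensional subspaces of $(T^*X)_x$, applied to the conical set $SS(K)_x$---shows that $\mathbb P(V)_x \cap \mathbb P(SS(K))_x$ has dimension $(d-1) + i - (\dim X - 1) = d + i - \dim X$ or is empty. Our hypothesis $d + i < \dim X$ forces the latter.

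Therefore the fiber of $\mathbb P(CC(K)) \cap \mathbb P(V)$ over $x$ is empty, and since $\pi\colon \mathbb P(T^*X) \to X$ is proper, the closed image $\pi(\mathbb P(CC(K)) \cap \mathbb P(V))$ in $X$ does not contain $x$. Consequently $\pi_*(\mathbb P(CC(K)) \cap \mathbb P(V))$ has multiplicity zero at $x$, that is $\gamma^i_{CC(K)}(x) = 0$, and Theorem \ref{first-Massey-intro} yields $\mathcal H^{-i}(K)_x = 0$. The only point requiring real care is the application of generic transversality to the possibly singular conical set $SS(K)_x$ over a perfect (possibly finite) base field, but this reduces to a standard dimension count on the Grassmannian, worked at the generic point if necessary.
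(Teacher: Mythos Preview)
Your proposal is correct and follows essentially the same approach as the paper's own proof: both reduce to Theorem~\ref{first-Massey-intro} and then argue by a dimension count in the fiber $\mathbb P((T^*X)_x)$ that a generic $\mathbb P(V)_x$ of dimension $i$ misses $\mathbb P(SS(K))_x$ when $i<\dim X-\dim SS(K)_x$, forcing the $i$th polar multiplicity to vanish. Your write-up is slightly more detailed (invoking Kleiman transversality and the properness of $\pi$ explicitly), but the content is identical.
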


Note that the singular support of a perverse sheaf $K$ is simply the support of its characteristic cycle \cite[Proposiiton 5.14(2)]{saito1}.

Our proof follows to a large extent the strategy of \citep{Massey}. In particular, we successively apply nearby and vanishing cycles to reduce to sheaves on lower-dimensional varieties. However, one key difference is the argument of \citep{Massey} involves passing to an analytic neighborhood and applying vanishing cycles along a sufficiently general analytic function. This general function will in particular be transverse to all the strata of a Whitney stratification of $X$ associated to $K$, except possibly the point $x$ itself. In characteristic $p$, we have access to neither analytic neighborhoods, analytic functions, nor well-behaved Whitney stratifications. Instead, we use a general rational function of degree two (i.e. a pencil of conics). We describe in Lemmas \ref{transversality-check} and \ref{nearby-formula} the transversality conditions the map must satisfy for our argument to work, in terms of the characteristic cycle, and then check, in Lemma \ref{genericity-lemma}, that a general pencil of conics satisfies all these transversality conditions. 

A similar approach can hopefully be used to adapt other arguments involving the characteristic cycle from characteristic $0$ to characteristic $p$.  (Hypersurfaces of larger degree would work equally well, and might come in handy if even more transversality is needed.)

\subsection{Application to equidistribution in $\Bun_2(\mathbb P^1)$}

In this paper, we prove, as an application of Theorem \ref{first-Massey-intro}:

Let $k$ be a field of characteristic $\neq 2$ and let $C$ be a hyperelliptic curve of genus $g$ over $k$. Define $\Theta_n$ to be the space of degree $n$ effective divisor classes on $C$, viewed as a closed subscheme of the variety $\operatorname{Pic}^n(C)$ parameterizing degree $n$ divisor classes.

\begin{theorem}\label{theta-betti-bound} For any $g \in \mathbb N$, $0,\leq a,b \leq g$, and $L \in \operatorname{Pic}^{2g-a-b} (C)$, we have \[ \sum_{i \in \mathbb Z} \dim  H^i (  (\Theta_{g-a} \cap L - \Theta_{g-b} )_{\overline{k}}, \mathbb Q_\ell) \leq 28^g/16 + 4\cdot 8^g + 2\cdot 4^g. \]  \end{theorem}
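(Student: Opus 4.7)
The strategy is to identify the sum of Betti numbers with the total stalk dimension of a pushforward sheaf on $\operatorname{Pic}^{2g-a-b}(C)$, apply the BBD decomposition theorem to break it into perverse summands, and bound each summand via Theorem \ref{first-Massey-intro}. The central geometric player is the proper addition morphism
\[
m : C^{(g-a)} \times C^{(g-b)} \to \operatorname{Pic}^{2g-a-b}(C), \qquad (D_1, D_2) \mapsto [D_1+D_2],
\]
whose source is smooth and projective. The fiber $m^{-1}(L)$ surjects onto $\Theta_{g-a}\cap(L-\Theta_{g-b})$ via $(D_1,D_2)\mapsto [D_1]$ with fibers products of linear systems $|D_1|\times |L-D_1|$ (products of projective spaces); a BBD-style decomposition argument applied to this surjection bounds $\sum_i \dim H^i(\Theta_{g-a}\cap(L-\Theta_{g-b}), \mathbb Q_\ell)$ by $\sum_i \dim H^i(m^{-1}(L), \mathbb Q_\ell)$, which by proper base change equals $\sum_i \dim \mathcal H^i(Rm_*\mathbb Q_\ell)_L$.

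Next, apply the BBD decomposition theorem to write $Rm_*\mathbb Q_\ell[2g-a-b] \simeq \bigoplus_j K_j[-j]$ with each $K_j$ a semisimple perverse sheaf on $\operatorname{Pic}^{2g-a-b}(C)$. Applying Theorem \ref{first-Massey-intro} to each $K_j$ (using the remark following the theorem to pass from $\mathbb F_\ell$- to $\mathbb Q_\ell$-coefficients) yields
\[
\sum_i \dim \mathcal H^i(Rm_*\mathbb Q_\ell)_L \;\leq\; \sum_k \gamma^k_{\Sigma}(L), \qquad \Sigma := \sum_j CC(K_j),
\]
using the additivity of polar multiplicities on effective cycles (and the fact that $CC(K_j)$ is effective for each perverse $K_j$). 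Since $\operatorname{Pic}^{2g-a-b}(C)$ is an abelian variety, the cotangent bundle $T^*\operatorname{Pic}^{2g-a-b}(C)$ is trivial with fiber $H^0(C,\Omega^1_C) \simeq k^g$, so $\mathbb P(\Sigma)$ is a cycle in $\operatorname{Pic}^{2g-a-b}(C) \times \mathbb P^{g-1}$ and the polar multiplicities $\gamma^k_\Sigma(L)$ are controlled by the intersection of $\mathbb P(\Sigma)$ with generic linear subspaces above $L$, plus multiplicities along the zero section.

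The main obstacle is the explicit computation of the total characteristic cycle $\Sigma$. Using Saito's pushforward formula for characteristic cycles, applied to $m$ along the cotangent correspondence $T^*(C^{(g-a)}\times C^{(g-b)}) \leftarrow m^* T^*\operatorname{Pic}^{2g-a-b}(C) \rightarrow T^*\operatorname{Pic}^{2g-a-b}(C)$, one relates $\Sigma$ to the zero section of the source's cotangent bundle pushed along the codifferential of $m$, which at $(D_1,D_2)$ is the residue map $H^0(C,\Omega^1_C) \to H^0(\Omega^1_C|_{D_1}) \oplus H^0(\Omega^1_C|_{D_2})$. The hyperelliptic structure enters through the semi-smallness of the Abel-Jacobi maps $C^{(n)} \to \Theta_n$: their fibers $\mathbb P^{h^0(D)-1}$ have dimension equal to the multiplicity of the $g^1_2$-pencil in $D$. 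Thus the components of $\Sigma$ are naturally indexed by pairs $(r,s)$ of pencil multiplicities in $D_1, D_2$ and supported on conormal bundles to translates of $\Theta_{g-a-2r}+\Theta_{g-b-2s}$ in $\operatorname{Pic}^{2g-a-b}(C)$. Bounding the degree of each such component in $\mathbb P^{g-1}$ using the basis of $H^0(C,\Omega^1_C)$ induced by the double cover $C \to \mathbb P^1$, and summing with the combinatorial factors from the stratification, will yield the explicit constants $28^g/16$, $4\cdot 8^g$, $2\cdot 4^g$, with the three terms corresponding to different groupings of strata by pencil-multiplicity ranges.
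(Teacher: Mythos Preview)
Your outline has the right architecture—proper base change to a stalk, perverse decomposition, then Theorem \ref{first-Massey-intro} summand by summand—and this is indeed the route the paper takes. But two genuine gaps stand between the outline and an actual proof.

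First, Saito's pushforward formula gives you $CC(Rm_*\mathbb Q_\ell[2g-a-b]) = m_{!}[\text{zero section}] = \sum_j (-1)^j CC(K_j)$, the \emph{alternating} sum of the characteristic cycles of the perverse summands. What you need for the Betti bound is the non-alternating sum $\Sigma = \sum_j CC(K_j)$, and there is no reason for these to agree: the $K_j$ in odd degrees are nonzero here. The paper resolves this by replacing your map $m = \underline{A}^{a,b}$ with $\Sigma^{a,b} = mult\circ(i^{g-a}\times i^{g-b})$ (the addition map from $\Theta_{g-a}\times\Theta_{g-b}$ rather than from the symmetric powers), and then invoking the structural fact (from \cite{TS}, Lemmas 3.1 and 3.6) that ${}^p\mathcal H^i(\Sigma^{a,b}_*\mathbb Q_\ell[2g-a-b])$ is a \emph{constant} sheaf for every $i\neq 0$. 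Thus only one perverse piece, ${}^p\mathcal H^0$, has a nontrivial characteristic cycle away from the zero section, and its $CC$ agrees with $CC(\Sigma^{a,b}_*\mathbb Q_\ell[2g-a-b])$ up to a zero-section term. The passage between $\underline{A}^{a,b}_*$ and $\Sigma^{a,b}_*$ uses Lemma \ref{ts-decomposition} and gives the inclusion-exclusion factor $(1-v_1^{-2})(1-v_2^{-2})$ in Lemma \ref{ts-singular-cc}. Without this step you cannot identify $\Sigma$ with anything computable.

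Second, the polar multiplicity $\gamma^i_\Sigma(L)$ is a \emph{local} multiplicity at the point $L$, not a global degree in $\mathbb P^{g-1}$. Your sentence ``bounding the degree of each such component in $\mathbb P^{g-1}$'' does not address this. The paper computes the relevant polar cycle classes in the Chow group of $J$ (Lemma \ref{individual-polar-bound}, following \cite{TS}) and then invokes the new Theorem \ref{appendix} of the appendix, due to Tsimerman, which bounds the multiplicity at any point of a codimension-$j$ subvariety $V\subset J$ by $2^{g-1-j}([V],[\Theta_j])$, using the gonality-$2$ cover $C\to\mathbb P^1$. This multiplicity-to-degree comparison is an essential ingredient that your outline omits. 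Finally, the three explicit constants do not come from ``groupings of strata'': $28^g/16$ is the summed polar multiplicities for $i<g$ (Lemma \ref{global-polar-bound}), $4\cdot 8^g$ bounds the zero-section coefficient $|m'_{a,b}|$ via the Euler characteristic of a generic fiber, and $2\cdot 4^g$ collects the Betti numbers of $J$ from the constant perverse summands together with a further zero-section correction.
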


This verifies a conjecture of \citet[Conjecture 1.4]{TS}.

\citet[Theorem 4.4]{TS} proved that this conjecture implies a certain equidistribution result, described below:

Let $\operatorname{Bun}_2(\mathbb P^1)$ be the set of isomorphism classes of rank two vector bundles on $\mathbb P^1_{\mathbb F_q}$, up to tensor products with line bundles on $\mathbb P^1_{\mathbb F_q}$. Let $\operatorname{Bun}_2^0(\mathbb P^1)$ be the subset consisting of rank two vector bundles with even degree, and $\operatorname{Bun}_2^1(\mathbb P^1)$ the subset consisting of bundles with odd degree. (Note that these subsets are stable under tensor product with line bundles).  Both $\operatorname{Bun}_2^0(\mathbb P^1)$ and $\operatorname{Bun}_2^1 (\mathbb P^1)$ admit ``uniform" probability measures $\mu_{ \operatorname{Bun}_2^0(\mathbb P^1)} $ and $\mu_{\operatorname{Bun}_2^1 (\mathbb P^1)}$, where the probability of a vector bundle is proportional to the inverse of the order of its automorphism group.  

Let $C$ by a hyperelliptic curve of genus $g$ over $\mathbb F_q$, with a fixed degree two map $\pi: C \to \mathbb P^1$. For $L$ a line bundle on $C$, $\pi_* L$ is a rank two vector bundle on $\mathbb P^1$, and hence defines a point of $\operatorname{Bun}_2(\mathbb P^1)$. This point is preserved by tensoring $L$ with line bundles pulled back from $\mathbb P^1$, so we can think of $\pi_* L$ as a function from $\operatorname{Pic}(C)/ \operatorname{Pic}( \mathbb P^1) $ to $\operatorname{Bun}_2(\mathbb P^1)$. Because  $\operatorname{Pic}(C)/ \operatorname{Pic}( \mathbb P^1) $ is a finite group, it admits a uniform probability measure.

\begin{theorem}\label{equidistribution-statement} Let $q> 28^4=  614,656$ be a prime power.

Fix a sequence of pairs $(C_i,M_i)$ of hyperelliptic curves $C_i$ and line bundles $M_i$ on $C$. Suppose that $\deg M_i \mod 2$ is constant, $g(C_i)$ converges to $\infty$, and the minimum $n$ such that $M_i$ is equivalent in $\operatorname{Pic}(C)/ \operatorname{Pic}( \mathbb P^1) $ to a divisor of degree $n$ converges to $\infty$ with $i$. 

Then as $i$ goes to $\infty$, the pushforward of the uniform probability measure on  $\operatorname{Pic}(C_i)/ \operatorname{Pic}( \mathbb P^1) $ along the map $ L \mapsto (\pi_* L, \pi_* (L \otimes M_i)) $ from $\operatorname{Pic}(C_i)/ \operatorname{Pic}( \mathbb P^1) $  to $\Bun_2(\mathbb P^1)$ converges to 

\[ \frac{1}{2}  \mu_{ \operatorname{Bun}_2^0(\mathbb P^1)} \times \mu_{ \operatorname{Bun}_2^0(\mathbb P^1)} + \frac{1}{2} \mu_{ \operatorname{Bun}_2^1(\mathbb P^1)}\times \mu_{ \operatorname{Bun}_2^1(\mathbb P^1)}\] if $\deg M_i \mod 2=0$ for all i and \[ \frac{1}{2}  \mu_{ \operatorname{Bun}_2^0(\mathbb P^1)} \times \mu_{ \operatorname{Bun}_2^1(\mathbb P^1)} + \frac{1}{2} \mu_{ \operatorname{Bun}_2^0(\mathbb P^1)}\times \mu_{ \operatorname{Bun}_2^1(\mathbb P^1)}\] if $\deg M_i \mod 2 \neq 0$ for all $i$. \end{theorem}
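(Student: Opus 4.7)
The strategy is to reduce Theorem \ref{equidistribution-statement} directly to Theorem \ref{theta-betti-bound} via \citet[Theorem 4.4]{TS}, which asserts precisely that the conjectural Betti bound on theta intersections implies the desired equidistribution. Thus, once Theorem \ref{theta-betti-bound} is proved, equidistribution follows as a formal consequence. To explain conceptually why the implication holds and why the threshold is $q > 28^4$: equidistribution on the finite abelian group $\operatorname{Pic}(C)/\operatorname{Pic}(\mathbb P^1)$ is equivalent by Weyl's criterion to the vanishing, as $g \to \infty$, of all nontrivial Fourier coefficients of the pushforward measure on $\Bun_2(\mathbb P^1)^2$. Each character of $\Bun_2(\mathbb P^1)$ decomposes as a signed combination of indicator functions of isomorphism classes $\mathcal O(a) \oplus \mathcal O(b)$, and by an $H^0$-computation using $\pi^* \mathcal O(1) \cong \mathcal O_C(h)$ (the hyperelliptic class) together with Serre duality, the condition $\pi_* L \cong \mathcal O(a) \oplus \mathcal O(b)$ is equivalent to membership of $L$ in an explicit intersection of shifted theta loci. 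Each Fourier coefficient becomes a character-twisted $\mathbb F_q$-point count on such a theta intersection, which by Grothendieck--Lefschetz and Deligne's Weil II bounds is controlled by the total Betti number of the intersection times a power of $q$. The Betti bound $\approx 28^g$ competes with the available powers of $q$, and $q > 28^4$ is what ensures decay to zero as $g \to \infty$.

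Therefore the substantive work lies in proving Theorem \ref{theta-betti-bound}, which I would approach via Theorem \ref{first-Massey-intro}. The plan is to express the sum $\sum_i \dim H^i(\Theta_{g-a} \cap (L - \Theta_{g-b}), \mathbb Q_\ell)$ as the total stalk dimension at $L$ of a suitable perverse sheaf (or complex of such) on $\operatorname{Pic}^{2g-a-b}(C)$. A natural candidate is (the perverse summands of) $R\alpha_* \mathbb Q_\ell$, where $\alpha: \operatorname{Sym}^{g-a}(C) \times \operatorname{Sym}^{g-b}(C) \to \operatorname{Pic}^{2g-a-b}(C)$ is the addition map; its fiber over $L$ maps birationally onto the theta intersection since effective divisors of degree $< g$ generically have $h^0 = 1$. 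By the decomposition theorem over $\overline k$, this pushforward splits into semisimple perverse sheaves up to shift, and Theorem \ref{first-Massey-intro} bounds each stalk contribution by the corresponding polar multiplicity of its characteristic cycle.

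The main obstacle is the explicit computation of these characteristic cycles and polar multiplicities. The hyperelliptic involution $\iota$ is essential: the fibers of $\alpha$ jump exactly along strata where $D_1 + D_2$ contains hyperelliptically conjugate pairs, giving an explicit stratification of $\operatorname{Pic}^{2g-a-b}(C)$ indexed by the number of such pairs. Since $\operatorname{Pic}(C)$ is an abelian variety, its cotangent bundle is trivial, reducing polar multiplicity computations to combinatorial intersection numbers of conormal cycles with generic linear subspaces in a trivial bundle. The constants $28^g/16$, $4 \cdot 8^g$, and $2 \cdot 4^g$ in Theorem \ref{theta-betti-bound} should then emerge as the total polar multiplicities summed across the strata of this hyperelliptic stratification, with the leading $28^g$ presumably coming from the generic stratum where all pairs are permitted.
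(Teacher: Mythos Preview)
Your proposal is correct and takes essentially the same approach as the paper: the paper's proof of Theorem \ref{equidistribution-statement} is the single sentence ``This follows immediately from Theorem \ref{theta-betti-bound} and \citep[Theorem 4.4]{TS},'' which is exactly your reduction. Your additional sketch of the proof of Theorem \ref{theta-betti-bound} (via the addition map $\underline{A}^{a,b}$, the decomposition theorem, and polar multiplicity bounds using the hyperelliptic stratification) is also in line with the paper's Section 5, though it goes beyond what is needed for the statement at hand.
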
 

This follows immediately from Theorem \ref{theta-betti-bound} and \citep[Theorem 4.4]{TS} (which covers in addition the case where the minimum $n$ does does not converge to $\infty$.) 

We now provide some context for these results:

For an imaginary quadratic number field $K$, we can consider the probability measure on the modular curve $X(1)$ that assigns equal measure to the points corresponding to all elliptic curves with complex multiplication by $\mathcal O_K$. Duke's theorem says that, as the discriminant of the fields go to $\infty$, these measures converge to the uniform measure on $X(1)$ \citep{Duke}.

Recalling that, over the complex numbers, there is a natural bijection between the elliptic curves with complex multiplication by $K$ and the class group $Cl(K)$, for each $\alpha$ in $Cl(K)$, let $z_{K,\alpha}$ be point of $X(1)$ of the elliptic curve corresponding to the class group element $\alpha$. For an ideal class $\sigma$, et $\mu_{K,\sigma}$ be the probability measure on $X(1)$ that assigns equal mass to $(z_{K,\alpha}, z_{K, \sigma\alpha})$ for all $\alpha$ in the class group. (One reason this set of points is natural to consider is that it is an orbit under the Galois group $\operatorname{Gal}(\overline{K}|K)$.)

A generalization of Duke's theorem conjectured by \citet[Conjecture 2 on p. 7]{MVUnpublished} is that $\mu_{K,\sigma}$ converges to the uniform measure on $X(1) \times X(1)$ whenever the discriminant of $K$ and the minimal norm of an 
invertible ideal with ideal class $\sigma$ both tend to $\infty$. 

The work of \citet{TS} is a function field analogue of this mixing conjecture. The analogy is constructed by replacing $\mathbb Q$ with $\mathbb F_q(T)$, $X(1)$ with the set $\operatorname{Bun}_2(\mathbb P^1)$, $K$ with the function field of $C$ over $\mathbb F_q$, $Cl(K)$ with $\operatorname{Pic}(C) / \operatorname{Pic}( \mathbb P^1)$, and $z_{K,\alpha}$ with $\pi_* L$. In this setting, Theorem \ref{equidistribution-statement} is exactly the analogue of the conjecture of Michel and Venkatesh (once the trivial but necessary determinant mod $2$ condition is dealt with).

The cohomological conjecture \citep[Conjecture 1.4]{TS} needed to prove this mixing result was proven in characteristic zero by \citet[Theorem 1.5]{TS}, using Massey's bounds for the stalks of perverse sheaves. Thus it was natural to approach the conjecture in characteristic $p$ using Theorem \ref{first-Massey-intro}. Our arguments to prove Theorem \ref{equidistribution-statement} follows closely the proof of \citep[Theorem 1.5]{TS}. One modification needed is that, in characteristic zero, one knows that all irreducible components of the characteristic cycle are Lagrangian varieties, hence are the conormal bundle to their supports, and one can thus calculate the characteristic cycle by studying only these supports. In characteristic $p$, these irreducible components are not necessarily Lagrangian, and so it is necessary to perform calculations in the cotangent bundle, not on the base variety. In addition, the argument uses a new idea provided by Tsimerman in the appendix to bound a crucial multiplicity.

Since the writing of \citep{TS}, the equidistribution conjecture on $X(1) \times X(1)$ was verified by \citet[Theorem 1.3]{Khayutin}, using ergodic theory methods. In addition, \citet{Khayutin} proved this statement over modular curves of higher level (while Theorem \ref{equidistribution-statement} requires level $1$.) However, this required two additional assumptions: that the fields $K$ are always split at two fixed primes $p_1, p_2$, and that their Dedekind zeta functions have no Landu-Siegel zero.  

In comparing these results, one should note that (unlike some results over $\mathbb Q$) it is not yet clear if the argument of \citet{Khayutin} can be made to work over function fields, as there are more measures to rule out. See \citep*[Theorem 1.2 and \S1.3]{ELM} for a measure classification result and a discussion of the difficulties arising from measures invariant under subgroups defined over subfields, of which $\mathbb F_q(T)$ has infinitely many. Such a transfer would allow one to remove the level 1 assumption from Theorem \ref{equidistribution-statement}, at the cost of introducing the split primes assumption. Going from the function field to the number field case, on the other hand, is as hard as usual.

\subsection{Acknowledgments}

The author was supported by Dr. Max R\"{o}ssler, the Walter Haefner Foundation and the ETH Zurich Foundation, and, later, served as a Clay Research Fellow, while working on this research. I would like to thank Takeshi Saito, Vivek Shende, and Jacob Tsimerman for helpful discussions about their works, Philippe Michel and Manfred Einsiedler for helpful comments about the general equidistribution problem, and the anonymous referee for helpful comments.

\section{Terminology}

We review some notation and terminology from \cite{Beilinson} and \cite{saito1}. (Our formulations of the definitions are mainly adapted from \cite{saito1}). All schemes are over a perfect field $k$, which in the application we can specialize to be the algebraic closure of a finite field.

\begin{definition}\label{C-transversal-1} \cite[Definition 3.5(1)]{saito1} Let $X$ be a smooth scheme over $k$ and let $C \subseteq T^* X$ be a closed conical subset of the cotangent bundle. Let $f: X\to Y$ be a morphism of smooth schemes over $k$.

We say that $f: X \to Y$ is \emph{$C$-transversal} if the inverse image $df^{-1}(C)$ by the canonical morphism $X \times_Y T^* Y \to T^* X$ is a subset of the zero-section $X \subseteq X \times_Y T^* Y$.
\end{definition}

\begin{definition}\cite[(1.2)]{Beilinson}\label{circ-forward} In the same setting as Definition \ref{C-transversal-1}, if $f$ is proper, let $f_\circ C$ be pushforward from $X \times_Y T^* Y$ to $T^*Y $ of the the inverse image $df^{-1}(C)$.

\end{definition}

\begin{definition}\cite[(2.3)]{saito-direct}\label{shriek-forward}  In the same setting as Definitions \ref{C-transversal-1} and \ref{circ-forward}, let $A$ be an algebraic cycle of codimension $\dim X$ supported on $C$. Assume also that $f_\circ C$ has dimension $\dim Y$.

Let $f_!  A$ be the pushforward from $X \times_Y T^* Y$ to $T^*Y $ of the intersection-theoretic inverse image $df^* C$. \end{definition}

\begin{definition}\cite[Definition 3.1]{saito1} Let $X$ be a smooth scheme over $k$ and let $C \subseteq T^* X$ be a closed conical subset of the cotangent bundle. Let $h: W \to X$ be a morphism of smooth schemes over $k$.

Let $h^* C$ be the pullback of $C$ from $T^* X$ to $W \times_X T^* X$ and let $K$ be the inverse image of the $0$-section $W \subseteq T^* W$ by the canonical morphism $dh: W \times_X T^* X \to T^* W$. 

We say that $h: W\to X$ is \emph{$C$-transversal} if the intersection $h^* C \cap K$ is a subset of the zero-section $W \subseteq W \times_X T^* X$.

If $h: W \to X$ is $C$-transversal, we define a closed conical subset $h^\circ C \subseteq T^* W$ as the image of $h^* C$ under $dh$ (it is closed by \cite[Lemma 3.1]{saito1}). \end{definition}

\begin{definition}\cite[Definition 3.5(2)]{saito1} We say that a pair of morphisms $h: W \to X$ and $f:W\to Y$ of smooth schemes over $k$ is \emph{$C$-transversal}, for $C \subseteq T^* X$ a closed conical subset of the cotangent bundle, if $h$ is $C$-transversal and $f$ is $h^\circ C$-transversal. \end{definition}

\begin{definition}\cite[Th. Finitude, Definition 2.12]{sga4h} We say that a morphism $f: W \to Y$ is locally acyclic relative to $K \in D^b_x(W, \mathbb F_\ell)$ if for each geometric point $x \in W$ geometric point  $t \in Y$ specializing to $f(x)$, $W_x$ the henselization of $W$ at $x$ and $W_{x,t}$ the fiber of $X_x$ over $t$, the natural map $H^* ( W_x, K) \to H^* ( W_{x,t}, K)$ is an isomorphism.  \end{definition}

For us, the main advantage of local acyclicity is that, when $Y$ is a smooth curve, the local acyclicity of $f$ implies that $R\Phi_f K$ vanishes, since taking $t$ the generic point, $H^* ( W_{x,t}, K)$ is the stalk of $R \Psi_ K$ at $x$ and $H^* ( W_x, K) $ is the stalk of $K$ at $x$, so the map is an isomorphism if and only if the mapping cone $R(\Phi_f K)_x$ vanishes.

\begin{definition}\cite[1.3]{Beilinson} For $K \in D^b_c(X, \mathbb F_\ell)$, let the \emph{singular support $SS(K)$ of $K$} be the smallest closed conical subset $C \in T^* X$ such that for every $C$-transversal pair $h: W \to X$ and $f: W\to Y$, the morphism $f: W\to Y$ is locally acyclic relative to $h^* K$. 

\end{definition}
The existence and uniqueness of $SS(K)$ is \cite[Theorem 1.3]{Beilinson}, which also proves that if $X$ has dimension $n$ then $SS(K)$ has dimension $n$ as well.

\begin{definition}\cite[Definition 7.1(1)]{saito1} Let $X$ be a smooth scheme of dimension $n$ over $k$ and let $C \subseteq T^* X$ be a closed conical subset of the cotangent bundle with each irreducible component of dimension $n$. Let $W$ be a smooth scheme of dimension $m$ over $k$ and let $h: W \to X$ be a morphism over $k$.

We say that $h$ is \emph{properly $C$-transversal} if it is $C$-transversal and each irreducible component of $h^* C$ has dimension $m$. \end{definition} 

\begin{definition}\cite[Definition 7.1(2)]{saito1} Let $X$ be a smooth scheme of dimension $n$ over $k$ and let $A$ be an algebraic cycle of codimension $n$ on $\subseteq T^* X$ whose support $C$ is a closed conical subset of the cotangent bundle (necessarily of dimension $n$).

 Let $W$ be a smooth scheme of dimension $m$ over $k$ and let $h: W \to X$ be a properly $C$-transversal morphism over $k$.
 
 We say that $h^{!} A$ is $(-1)^{n-m}$ times the pushforward along $dh:  W \times_X T^* X \to T^* W$ of the pullback along $h:  W \times_X T^* X \to T^* X$ of $A$, with the pullback and pushforward in the sense of intersection theory.  \end{definition}

Here the pushforward in the sense of intersection theory is well-defined because, by \cite[Lemma 3.1]{saito1}, $dh$ is finite when restricted to (the induced reduced subscheme structure) on $h^* C$, i.e finite when restricted to the support of $h^* A$.

\begin{definition} \cite[Definition 5.3(1)]{saito1} Let $X$ be a smooth scheme of dimension $n$ over $k$  and let $C\subseteq T^* X$ be a closed conical subset of the cotangent bundle. Let $Y$ be a smooth curve over $k$ and $f: X\to Y$ a morphism over $k$. 

We say a closed point $x \in X$ is at most an \emph{isolated $C$-characteristic point} of $f$ if $f$ is $C$-transversal when restricted to some open neighborhood of $x$ in $X$, minus $x$.  We say that $x\in X$ is an \emph{isolated $C$-characteristic point} of $f$ if this holds, but $f$ is not $C$-transversal when restricted to any open neighborhood of $X$. \end{definition}

\begin{definition} For $V$ a representation of the Galois group of a local field over $\mathbb F_\ell$ (or a continuous $\ell$-adic representation), we define $\operatorname{dimtot} V$ to be the dimension of $V$ plus the Swan conductor of $V$. For a complex $W$ of such representations, we define $\operatorname{dimtot}W $ to be the alternating sum $\sum_i (-1)^i \operatorname{dimtot} \mathcal H^i(W)$ of the total dimensions of its cohomology objects. \end{definition}

\begin{definition}\cite[Definition 5.10]{saito1} Let $X$ be a smooth scheme of dimension $n$ over $k$ and $K$ an object of $D^b_c(X, \mathbb F_\ell)$. Let the \emph{characteristic cycle of $K$}, $CC(K)$ , be the unique $\mathbb Z$-linear combination of irreducible components of $SS(K)$ such that for every \'{e}tale morphism $j: W \to X$, every morphism $f: W\to Y$ to a smooth curve and every at most isolated $h^\circ SS(\mathcal F)$-characteristic point $u \in W$ of $f$, we have

\[ - \operatorname{dimtot} \left( R \Phi_f(j^* K) \right)_u =  (j^* CC(K), (df)^*\omega )_{T^*W,u} \] where $\omega$ is a meromorphic one-form on $Y$ with no zero or pole at $f(u)$.

\end{definition}

Here the notation $(,)_{T*W ,u}$ denotes the intersection number in $T^* W$ at the point $u$.

The existence and uniqueness is \cite[Theorem 5.9]{saito1}, except for the fact that the coefficients lie in $\mathbb Z$ and not $\mathbb Z[1/p]$, which is \cite[Theorem 5.18]{saito1} and is due to Beilinson, based on a suggestion by Deligne.

\section{Equivalences between definitions of the polar multiplicity}\label{equivalences}

In this section we give an alternate definition of the polar multiplicity, check that it is equivalent to the previous one, and check that both are well-defined.

\begin{defi} Let $Y$ be a smooth variety with a map $f$ to a variety $X$ (which may be the identity), and let $x$ be a point on $X$. Let $C_1, C_2$ be algebraic cycles on $Y$ of total dimension $\dim Y$ such that $C_1 \cap C_2 \cap f^{-1}(x)$ is proper. Assume that all connected components of $C_1 \cap C_2$ are either contained in $f^{-1}(x)$ and proper or disjoint from $X$. We define their intersection number locally at $x$  \[ (C_1,C_2)_{Y, x} \] to be the sum of the degrees of the refined intersection $C_1 \cdot C_2$ \cite[p. 131]{Fulton} on all connected components of $C_1 \cap C_2$ contained in $f^{-1}(x)$. \end{defi}

\begin{lemma}\label{intersection-comparison} Let $X$ be a smooth variety. Let $C$ be a conical cycle on the cotangent bundle of $X$ of dimension $\dim X$ and let $x$ be a point on $X$.  Let $\mathbb P(C) \subseteq \mathbb P (T^* X)$ be the projectivization of $C$ inside the projectivization of the cotangent bundle of $X$. Let $i$ be a natural number with $0 \leq i < \dim X$.

Consider $Y \subset X$ a smooth variety of dimension $\dim X-i$ through $x$ and $V$ a sub-bundle of $T^* X$  of rank $i+1$ on $Y$.  Let $\mathbb P(V) \subseteq \mathbb P( T^* X)$ be the projectivization of $V$ over $Y$. For any $(Y,V)$ such that the strict transforms of $\mathbb P(C)$ and $\mathbb P(V)$ in the blowup of $\mathbb P(T^* X)$ at the fiber over $X$ do not intersect inside the exceptional divisor, the contribution of the fiber over $x$ to the intersection number $\mathbb P(C) \cap \mathbb P(V)$ depends only on $i$ and is independent of $Y,V$.

Furthermore, to satisfy the condition on the strict transform, it is sufficient that the tangent space of $Y$ at $x$ and the fiber of $V$ over $x$ are independent generic subspaces of the tangent and cotangent spaces of $X$ at $x$ respectively. In particular, such a $Y$ and $V$ exist.\end{lemma}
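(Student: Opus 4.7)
My plan is to combine a dimension analysis on the exceptional divisor of a blowup with a deformation-invariance argument using the principle of conservation of number. Set $n = \dim X$, $P = \mathbb P(T^* X)$, and $F = \pi^{-1}(x) = \mathbb P(T^*_x X)$. I would parametrize admissible pairs $(T_x Y, V_x)$ by $\mathcal G := \mathrm{Gr}(n-i, T_x X) \times \mathrm{Gr}(i+1, T^*_x X)$. Fixing an étale coordinate chart near $x$, each $g = (U_g, W_g) \in \mathcal G$ extends in a canonical way to a germ $(Y_g, V_g)$ satisfying the hypotheses of the lemma; the intersection number to be computed depends only on this germ. The goal is to show that the strict transform condition holds on a dense open locus of $\mathcal G$, and that on this locus the local intersection number is independent of $g$.

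First I would work on the blowup $\beta\colon \tilde P \to P$ along $F$, with exceptional divisor $E$. Because $\pi\colon P \to X$ is smooth and $F$ is the fiber over $x$, the normal bundle $N_{F/P}$ is the trivial rank $n$ bundle $\pi^* T_x X|_F$, so $E \cong \mathbb P(T^*_x X) \times \mathbb P(T_x X)$, of dimension $2(n-1)$. A local computation identifies the trace of the strict transform of $\mathbb P(V_g)$ on $E$ as the product of linear subspaces $\mathbb P(W_g) \times \mathbb P(U_g)$, of dimension $i + (n-i-1) = n-1$, recording respectively the fiber of $V_g$ at $x$ and the tangent directions along $Y_g$. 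Meanwhile the strict transform $\widetilde{\mathbb P(C)}$ cuts $E$ in a Cartier divisor on an $(n-1)$-dimensional variety, hence in a subset $S_C \subset E$ of dimension at most $n-2$ that depends only on $C$ and $x$. Since $(n-1) + (n-2) = 2n-3 < 2(n-1) = \dim E$, a Kleiman-style moving argument, using that $\mathrm{GL}(T_x X) \times \mathrm{GL}(T^*_x X)$ acts transitively on $\mathcal G$ and on $E$ via the product action, shows that $\mathbb P(W_g) \times \mathbb P(U_g)$ avoids $S_C$ for $g$ in a dense open $\mathcal G^\circ \subset \mathcal G$. This handles existence and the ``generic independent subspaces'' clause.

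To establish invariance on $\mathcal G^\circ$, I would form the universal intersection $\mathcal Z \subset \mathcal G^\circ \times P$ and, working inside a suitable neighborhood $\mathcal U$ of $F$, check that $\mathcal Z \cap (\mathcal G^\circ \times \mathcal U)$ is finite over $\mathcal G^\circ$. The lemma's hypothesis forces the fibers of $\mathcal Z$ near $F$ to be zero-dimensional, since if $\mathbb P(C) \cap \mathbb P(V_g)$ had an excess positive-dimensional component along $F$, the strict transforms would necessarily meet on $E$. Once the finiteness over $\mathcal G^\circ$ is in hand, the principle of conservation of number \cite[Ch.~10]{Fulton} shows that the sum of local intersection multiplicities at points of $\mathbb P(C) \cap \mathbb P(V_g) \cap F$ is locally constant on $\mathcal G^\circ$, hence constant by connectedness of the product of Grassmannians. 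Combining this with the previous step proves the lemma.

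The hard part will be producing the neighborhood $\mathcal U$ and verifying the required finiteness, i.e.\ arguing that as $g$ varies in $\mathcal G^\circ$, no intersection point of $\mathbb P(C) \cap \mathbb P(V_g)$ flows into or out of $\mathcal U$ through its boundary. The most natural route is to run the construction on the blowup $\tilde P$ rather than $P$: avoidance of $E$ together with properness of $E$ over $x$ should yield a proper flat family to which conservation of number applies directly. An alternative is to express the local contribution at $F$ as an explicit intersection of tautological classes on $F$ and its normal bundle, which would make the invariance manifest at the cost of a more elaborate Chern class calculation.
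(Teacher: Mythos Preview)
Your approach is essentially the paper's: both arguments use the blowup $\tilde P \to P$ along $F$ to show that, under the strict-transform hypothesis, $\mathbb P(C)\cap\mathbb P(V_g)$ is confined to $F$ in a punctured neighborhood of $x$, then invoke conservation of number; both use the same dimension count on $E \cong \mathbb P(T^*_xX)\times\mathbb P(T_xX)$ for the genericity clause. The paper connects two given pairs by an explicit one-parameter family (convex combinations of local equations) rather than working over the full Grassmannian, and argues the dimension count by hand rather than via Kleiman, but these are cosmetic differences.

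One small inaccuracy in your sketch: the claim that the strict-transform condition forces the fibers of $\mathcal Z$ near $F$ to be zero-dimensional is not correct as stated. If $C$ contains $T^*_xX$ as a component (the only possible $n$-dimensional component supported over $x$), then $\mathbb P(C)\cap\mathbb P(V_g)\supseteq \mathbb P(W_g)$, which has dimension $i$, yet the strict transform of $\mathbb P(C)$ discards this vertical component and need not meet $\widetilde{\mathbb P(V_g)}$ on $E$. This does not harm the argument: what conservation of number actually requires, and what your blowup route genuinely delivers, is that over a neighborhood $U$ of $x$ the set-theoretic intersection is supported on the proper subvariety $F$, so the refined intersection degree on $F$ is locally constant in $g$ (Fulton, Ch.~10). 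Replace ``finite over $\mathcal G^\circ$'' with ``proper over $\mathcal G^\circ$ with support in $\mathcal G^\circ\times F$'' and the rest goes through; the paper's proof implicitly uses exactly this refined formulation.
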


\begin{proof} This is a local question, and we may work locally.  Then given $(Y,V)$ and $(Y',V')$ both satisfying this condition, we may deform one into the other by a connected family of varieties. For instance we may represent $Y$ and $Y'$ as local complete intersections and deform the equations defining $Y$ into the equations defining $Y'$ by convex combination, and similarly for the vector subbundles defining $Y'$ and $V'$. The condition that the intersection of the strict transforms vanishes is an open condition, because the strict transform of $\mathbb P(V)$ varies properly with $Y$ and $V$, so we may assume that there is a family connecting $(Y,V)$ to $(Y',V')$ where every member satisfies this condition. Then because the intersection locus in the blow-up is closed, its image inside $X$ is too, and because it is disjoint from $x$, there must be some neighborhood of $X$ that it doesn't intersect. Then for any $Y_t,V_t$ in the family, the intersection of $\mathbb P(C)$ and $\mathbb P(V_t)$ in $\mathbb P (T^* X)$ is empty in that neighborhood minus $x$, so the contribution to the intersection coming from the fiber over $x$ is constant in the family, and thus is equal for $(Y,V)$ and $(Y',V')$.

For the claim about generic subspaces, note that $C$ has dimension $\dim X$, so $\mathbb P(C)$ has dimension $\dim X-1$, and the intersection of its strict transform with the fiber has dimension $\dim X-1$.  The fiber of the blowup is isomorphic to $\mathbb P( (TX)_x ) \times \mathbb P( (T^*X)_x )$, of dimension $2 \dim X - 2$, and the strict transform of $\mathbb P(V)$ is $\mathbb P((TY)_x) \times \mathbb P ( V_x)$, of dimension $\dim X - i -1 + i = \dim X-1$. If we take $(TY)_x$ and $V_x$ to be general subspaces, this intersection will have the expected dimension, which is $-1$, and hence be empty. \end{proof}

\begin{defi}\label{polar-multiplicity-2} Let $X$ be a smooth variety. Let $C$ be a conical cycle on the cotangent bundle $T^* X$ of $X$ of dimension $\dim X$ and let $x$ be a point on $X$.

For $0 \leq i< \dim X$, let $Y$ be a sufficiently general smooth subvariety of $X$ of codimension $i$ passing through $x$ and let $V$ be a sufficiently general sub-bundle of $T^* X$ over $Y$ with rank $i+1$. Define the $i$th polar multiplicity of $C$ at $x$ to be the intersection number \[ ( \mathbb P(C) , \mathbb P(V))_{ \mathbb P(T^* X), x} \] where $\mathbb P(T^* X)$ is the projectivization of the vector bundle $T^* X$.

Here ``sufficiently general" means that the strict transform of $\mathbb P(V)$ in the blowup of $\mathbb P(T^* X)$ at the fiber over $x$ does not intersect the strict transform of $\mathbb P(C)$ in that same blowup within the fiber over $x$.

For $i=\dim X$, define the $i$th polar multiplicity of $C$ at $x$ to be the multiplicity of the zero section in $C$. \end{defi}

It follows from Lemma \ref{intersection-comparison} that this is well-defined.

\begin{lemma}\label{multiplicity-polar} Definitions \ref{polar-multiplicity} and \ref{polar-multiplicity-2} are equivalent.
 \end{lemma}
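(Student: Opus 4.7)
The plan is to identify both definitions with the same local intersection-theoretic quantity on $\mathbb P(T^*X)$, using the projection formula together with the classical characterization of multiplicity by intersection with generic linear sections. The case $i = \dim X$ is trivial since both definitions explicitly set the top polar multiplicity to be the multiplicity of the zero section in $C$, so assume $0 \le i < \dim X$.

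Starting from Definition \ref{polar-multiplicity}, I choose a rank $i+1$ sub-bundle $V \subseteq T^*X$ on a neighborhood of $x$ with $V_x$ a general point of the Grassmannian; such a $V$ exists by picking a general fiber and extending via a local trivialization of $T^*X$. Since $\mathbb P(V)$ has dimension $\dim X + i$ and $\mathbb P(C)$ has dimension $\dim X - 1$ inside the $(2\dim X -1)$-dimensional space $\mathbb P(T^*X)$, their intersection has expected pure dimension $i$. I would then invoke the classical characterization of Samuel multiplicity by generic codimension $i$ slices (cf.\ \cite[\S 4.3]{Fulton}): for a cycle $Z$ of pure dimension $i$ on a smooth variety $X$ and a sufficiently general smooth subvariety $Y \subseteq X$ of codimension $i$ through $x$, the multiplicity of $Z$ at $x$ equals the local intersection number $(Z, Y)_{X, x}$. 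Applying this to $Z = \pi_*(\mathbb P(V) \cap \mathbb P(C))$ rewrites the polar multiplicity of Definition \ref{polar-multiplicity} as a local intersection number on $X$.

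Next I would apply the projection formula for the proper (in fact flat) map $\pi: \mathbb P(T^*X) \to X$: for $Y$ chosen generic enough that the relevant intersections are proper in a neighborhood of the fiber $\pi^{-1}(x)$, one has
\[ (\pi_*(\mathbb P(V) \cap \mathbb P(C)), Y)_{X, x} = (\mathbb P(V) \cap \mathbb P(C), \pi^{-1}(Y))_{\mathbb P(T^*X), x}. \]
Since $\pi^{-1}(Y) = \mathbb P(T^*X|_Y)$ and therefore $\mathbb P(V) \cap \pi^{-1}(Y) = \mathbb P(V|_Y)$, the right-hand side equals $(\mathbb P(C), \mathbb P(V|_Y))_{\mathbb P(T^*X), x}$. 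This is exactly the quantity appearing in Definition \ref{polar-multiplicity-2} applied to the pair $(Y, V|_Y)$.

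To conclude, I would verify that the genericity conditions match up: by Lemma \ref{intersection-comparison}, $(Y, V|_Y)$ is an admissible choice for Definition \ref{polar-multiplicity-2} provided $(TY)_x$ and the fiber $V_x$ are independent general subspaces of $(TX)_x$ and $(T^*X)_x$ respectively. This holds because $V_x$ is general by hypothesis of Definition \ref{polar-multiplicity}, and $Y$ can be chosen so that $(TY)_x$ is general (which is in any case required for Samuel's formula). Combined with the choice-independence in Definition \ref{polar-multiplicity-2} provided by Lemma \ref{intersection-comparison}, this simultaneously proves that Definition \ref{polar-multiplicity} is itself independent of $V$ for $V_x$ sufficiently general, which completes the remaining well-definedness claim. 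The main technical obstacle will be ensuring the intersections remain proper in a neighborhood of $x$ so that local intersection multiplicities and the projection formula both apply; this should reduce to a standard dimension count exploiting the genericity of $V_x$ and $(TY)_x$.
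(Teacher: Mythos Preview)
Your proposal is correct and follows essentially the same route as the paper: convert the multiplicity in Definition~\ref{polar-multiplicity} into a local intersection number with a generic codimension-$i$ smooth $Y$, push this up to $\mathbb P(T^*X)$ via the projection formula, identify $\mathbb P(V)\cap\pi^{-1}(Y)$ with $\mathbb P(V|_Y)$, and then verify that $(Y,V|_Y)$ satisfies the strict-transform genericity condition of Definition~\ref{polar-multiplicity-2}. The only minor difference is that the paper carries out the final genericity check by an explicit two-step dimension count (first fixing $V_x$ general, then choosing $(TY)_x$ general relative to it) rather than invoking Lemma~\ref{intersection-comparison} directly; since the order of quantifiers matters here ($V$ is fixed before $Y$ is chosen), you should make sure your appeal to Lemma~\ref{intersection-comparison} accounts for this, but the underlying argument is the same.
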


\begin{proof} By definition, the multiplicity of an algebraic cycle at a point is the local intersection number with a sufficiently general smooth scheme passing through that point. For $Y$ a sufficiently general smooth subscheme of $X$ of dimension $n-i$, we have an identity of intersection numbers \[ (\pi_* ( \mathbb P(C) \cap \mathbb P(V) , Y)_{X,x}) = (\mathbb P(C) \cap \mathbb P(V), \pi^* Y)_{\mathbb P(T^*X), X} =  (\mathbb P(C), \mathbb P(V) \cap \pi^* Y)_{\mathbb P(T^*X), X} .\]

Note that $\mathbb P(V) \cap \pi^* Y$ is simply the projectivization of the restriction $V'$ of $V$ to $Y$. So to check that this is the polar multiplicity, it suffices to check that if $V_x$ is sufficiently general, and $Y$ is sufficiently general depending on $V$, that the restriction of $V$ to $Y$ is sufficiently general in the sense of Definition \ref{polar-multiplicity-2}. This occurs when the intersection of the strict transform of $\mathbb P(C)$ with the strict transform of $\mathbb P(V')$ in the exceptional divisor of the blowup of $\mathbb P (T^* X)$ at the fiber over $x$ vanishes.

The exceptional divisor is isomorphic to $\mathbb P ( (TX)_x ) \times \mathbb P( (T^*X)_x) $. Inside it, the strict transform of $\mathbb P(V')$ is $\mathbb P ( (TY)_x) \times \mathbb P (  V_x)$. The intersection of the strict transform of $\mathbb P(C)$ with the exceptional divisor has dimension at most $\dim \mathbb P(C) -1 =\dim C-2= n-2$. For $V$ of dimension $i+1$, $\mathbb P( (TX)_x) \times \mathbb P( V_x)$ has codimension $n-i-1$, so for $V_x$ sufficiently general,  the intersection of the strict transform with $( \mathbb P( (TX)_x) \times \mathbb P( V_x))$ has dimension $i-1$. Then for general $\mathbb P( (TY)_x)$ of codimension $i$, the intersection of $\mathbb P ( (TY)_x) \times \mathbb P (  V_x)$ with the strict transform is empty.  
\end{proof}

\section{A bound for Betti numbers}

\begin{lemma}\label{transversality-check} Let $f: X \to Y$ be a smooth morphism of smooth varieties with $X$ of dimension $n$ and $Y$ of dimension $n-m$. Let $C$ be a closed conical subset of the cotangent bundle $T^* X$ of $X$ with all irreducible components of dimension  $n$. Let $y$ be a point in $Y$ and let $i$ be the inclusion of $f^{-1}(y)$ into $X$, so that we have a Cartesian square. \begin{equation}\label{i-diagram}\begin{tikzcd} f^{-1}(y) \arrow[r,"i"] \arrow[d] & X \arrow[d,"f"] \\ y \arrow[r] & Y \end{tikzcd}\end{equation}

 If $f$ is $C$-transversal and the fibers of the composition $C \to X \to Y$ have dimension $m$, then $i$ is properly $C$-transversal. \end{lemma}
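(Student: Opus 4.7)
The plan is to verify the two conditions in the definition of proper $C$-transversality for $i$: first, that $i$ itself is $C$-transversal, and second, that every irreducible component of $i^{*}C$ has dimension equal to $\dim f^{-1}(y) = m$.

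For $C$-transversality, I would identify the kernel $K$ of $di : f^{-1}(y) \times_X T^{*}X \to T^{*}f^{-1}(y)$. Since $f$ is smooth, $f^{-1}(y)$ is smooth and $i$ is a regular embedding whose conormal bundle appears in the standard short exact sequence
\[ 0 \to i^{*} f^{*} T^{*}Y \to i^{*} T^{*}X \to T^{*} f^{-1}(y) \to 0. \]
This identifies $K$ with the image of $f^{-1}(y) \times_Y T^{*}Y$ inside $f^{-1}(y) \times_X T^{*}X$ under the canonical map $df$. A point of $i^{*}C \cap K$ is therefore of the form $(x, df^{*}\eta)$ with $x \in f^{-1}(y)$, $\eta \in T^{*}_y Y$, and $df^{*}\eta \in C$. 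But $C$-transversality of $f$ says precisely that any such $\eta$ lying in $df^{-1}(C)$ must be zero. Hence $i^{*}C \cap K$ lies in the zero section, establishing $C$-transversality of $i$.

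For the dimension condition, I would observe that as a scheme $i^{*}C = f^{-1}(y) \times_X C$ is exactly the fiber over $y$ of the composition $C \to X \to Y$. By hypothesis this fiber has dimension $m$, which upper bounds the dimension of every component. For the matching lower bound, note that $f^{-1}(y)$ is cut out locally in $X$ by $n - m$ equations pulled back from $Y$, so $i^{*}C$ is cut out in $C$ by the same $n - m$ equations. Since every irreducible component of $C$ has dimension $n$, Krull's height theorem forces each irreducible component of $i^{*}C$ to have dimension at least $n - (n-m) = m$. Combined, every component of $i^{*}C$ has dimension exactly $m$, which is what proper $C$-transversality demands.

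The crux is really just the identification of $K$ via the cotangent sequence of the smooth morphism $f$; after that, both conditions follow formally. I do not anticipate a serious obstacle, since the lemma is essentially a compatibility between Saito's two notions of transversality, mediated by the smoothness of $f$ and the equidimensionality of $C \to Y$.
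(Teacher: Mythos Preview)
Your proof is correct and follows essentially the same approach as the paper: both identify the kernel of $di$ over $f^{-1}(y)$ with the image of $df$ (you via the cotangent exact sequence for the smooth morphism $f$, the paper by describing it as ``one-forms pulled back from $Y$''), and then invoke $C$-transversality of $f$ to force that kernel's intersection with $C$ into the zero section. Your treatment of the dimension condition is in fact slightly more careful than the paper's: the paper simply records that the fiber of $C \to Y$ over $y$ has dimension $m$, whereas you supplement this upper bound with the Krull lower bound coming from the $n-m$ local equations defining $f^{-1}(y)$, thereby verifying that \emph{every} irreducible component of $i^{*}C$ has dimension exactly $m$, as the definition requires.
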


\begin{proof} Because $f$ is $C$-transversal, the inverse image of $C$ by $df: X \times_Y T^* Y \to T^* X$ is a subset of the zero-section. Hence the intersection of $C$ with the image of $df$ is a subset of the zero-section, as only nonzero points are sent to nonzero points by $df$. The image of $df$ in $T^* X$ consists of $1$-forms that are pulled back from $Y$, i.e. one-forms that are transverse to the fibers of $X$, which are exactly those one-forms in the kernel of $di: f^{-1}(y) \times_X T^* X \to T^* f^{-1}(y)$. Hence $i$ is $C$-transversal.

$i^* C = f^{-1}(y) \times_X C =  y \times_Y C$ is exactly a fiber of the composition $C \to Y$, and thus the claim that it has dimension $\dim X - \dim Y = \dim (f^{-1}(y))$ verifies that $i$ is properly $C$-transversal.

\end{proof}

\begin{lemma}\label{nearby-formula} Let $X$ be a smooth variety and $Y$ a smooth curve, both over a perfect field $k$. Let $K$ be an object in $D^b_c(X,\mathbb F_\ell)$.

Let $CC'(K)$ be $CC(K)$ with any occurrence of the cotangent space at $x$ removed, and let $SS'(K)$ be $SS(K)$ with any occurrence of the cotangent space at $x$ removed. 

Let  $f: X \to Y$ be a smooth projective morphism that is $SS'(K)$-transversal and such that the fibers of $SS'(K)$ over $Y$ have dimension $\dim X-1$ . Let $y=f(x)$, let $Z =f^{-1}(y)$ and let $i$ be the inclusion of $Z$ into $X$, as in Diagram \ref{i-diagram}.  Then 
\[i^! CC'(K) = CC ( R \Psi_{f} K) \]
where we view the nearby cycles relative to $f$ as a complex of sheaves on $f^{-1}(Y)$. \end{lemma}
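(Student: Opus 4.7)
The plan is to verify the defining Milnor-type formula of the characteristic cycle for $i^! CC'(K)$ and conclude by the uniqueness of $CC$.

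First, Lemma \ref{transversality-check} applied to $C = SS'(K)$ (using that $f$ is $SS'(K)$-transversal with $\dim Y = 1$ and fibers of $SS'(K) \to Y$ of dimension $\dim X - 1$) shows that $i: Z \to X$ is properly $SS'(K)$-transversal, so $i^! CC'(K)$ is a well-defined codimension-$\dim Z$ cycle on $T^* Z$ supported on $i^\circ SS'(K)$. By the uniqueness in \cite[Theorem 5.9]{saito1}, it suffices to verify, for every \'etale $j: W \to Z$, every morphism $g: W \to Y'$ to a smooth curve, and every at most isolated $(j^\circ SS(R\Psi_f K))$-characteristic point $u \in W$ of $g$, the identity
\[
-\operatorname{dimtot}(R\Phi_g(j^* R\Psi_f K))_u = (j^*(i^! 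CC'(K)),\, (dg)^* \omega)_{T^*W, u}.
\]

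To compute the left-hand side I would lift the test datum to $X$ using smoothness of $f$: extend $j$ to an \'etale $\tilde j: \tilde W \to X$ (with $W = \tilde W \times_X Z$ after shrinking) and extend $g$ to a sufficiently generic function $G: \tilde W \to Y'$ with $G|_W = g$, arranged so that $u$ is at most an isolated $\tilde j^\circ SS(K)$-characteristic point of $G$. The Milnor formula for $K$ applied with $G$ gives
\[
-\operatorname{dimtot}(R\Phi_G K)_u = (\tilde j^* CC(K),\, (dG)^* \omega)_{T^*\tilde W, u}.
\]
Decompose $CC(K) = CC'(K) + m \cdot [T^*_x X]$. The component $m \cdot [T^*_x X]$ contributes $m$ to the intersection when $\tilde j(u) = x$ (since $\tilde j^* [T^*_x X]$ is the fiber of $T^*\tilde W$ above $\tilde j^{-1}(x)$ and meets the section $(dG)^* \omega$ transversally at the zero cotangent vector) and contributes $0$ otherwise. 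On the other hand, the standard compatibility of nearby and vanishing cycles for the smooth morphism $f$ produces an isomorphism $R\Phi_G K \cong R\Phi_g R\Psi_f K$ away from $x$, together with a punctual correction at $x$ whose total dimension is exactly $m$ (reflecting that $R\Psi_f K$ ``forgets" the cotangent-space component of $SS(K)$ at $x$). Combining these gives
\[
-\operatorname{dimtot}(R\Phi_g(j^* R\Psi_f K))_u = (\tilde j^* CC'(K),\, (dG)^* \omega)_{T^*\tilde W, u}.
\]
Finally I would identify this with $(j^*(i^! CC'(K)),\, (dg)^* \omega)_{T^*W, u}$ by an intersection-theoretic computation: unfolding $i^!$ as $(-1)\cdot (di)_* \circ i^*$ and using $G|_W = g$ together with the identification of the conormal direction of $W$ in $\tilde W$ provided by $d(f \circ \tilde j)$ produces exactly this equality.

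The main obstacle is the punctual correction step, namely proving that $-\operatorname{dimtot}(R\Phi_G K)_u + \operatorname{dimtot}(R\Phi_g R\Psi_f K)_u$ equals the multiplicity $m$ of $T^*_x X$ in $CC(K)$ when $u$ lies above $x$. Geometrically this reflects the fact that $R\Psi_f K$ at $x$ differs from the stalk $K_x$ by a complex whose total dimension is exactly $m$; establishing this likely requires an exact-triangle argument comparing $R\Phi_G K$, $R\Phi_g R\Psi_f K$, and a punctual contribution supported at $x$, and is the step where the $SS'(K)$-transversality hypothesis (rather than full $SS(K)$-transversality) is essentially used.
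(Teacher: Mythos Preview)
Your approach is substantially different from the paper's, and the step you flag as the ``main obstacle'' is a genuine gap that you have not closed. The claimed comparison $R\Phi_G(\tilde j^* K) \cong R\Phi_g(j^* R\Psi_f K)$, even away from $x$, is an assertion about iterated nearby/vanishing cycles along the two-parameter family $(f\circ\tilde j, G):\tilde W \to Y\times Y'$, and there is no general commutation statement of this shape without further transversality hypotheses that you neither formulate nor verify. The ``punctual correction'' equating $-\operatorname{dimtot}(R\Phi_G K)_u + \operatorname{dimtot}(R\Phi_g R\Psi_f K)_u$ with $m$ is likewise asserted without an argument; it would require an exact triangle relating these objects that you do not construct. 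In short, verifying the Milnor formula for \emph{all} test data by lifting to $X$ is much harder than necessary, and the hard part is left undone.

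The paper proceeds quite differently and avoids these difficulties. It first observes that $R\Psi_f K = i^* K$ away from $x$ (local acyclicity from $SS'(K)$-transversality of $f$), and then invokes Saito's compatibility of $CC$ with properly transversal pullback \cite[Theorem 7.6]{saito1} to obtain $CC(R\Psi_f K) = CC(i^*K) = i^! CC'(K)$ away from $x$ in one stroke. The difference is therefore a multiple of $[T^*_x Z]$, and the paper pins down this constant by a single numerical check: intersect both cycles with the zero section $Z \subset T^*Z$. The right-hand side is $\chi(f^{-1}(\eta),K)$ by the index formula \cite[Theorem 7.13]{saito1}; the left-hand side is computed by unwinding $i^!$ as an intersection with the conormal bundle $N^*Z$, then using $SS'(K)$-transversality of $f$ to see that $(CC'(K), N^* f^{-1}(y'))_{T^*X}$ is constant in $y'$ and hence equals the value at the generic point, which again reduces to $\chi(f^{-1}(\eta),K)$. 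The two sides match, so the constant is zero.
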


Note that $SS(K)$ here is a union of irreducible varieties of dimension $\dim X$ and $CC(K)$ is a $\mathbb Z$-linear combination of irreducible varieties of dimension $\dim X$. When we refer to removing the cotangent space at $X$, an irreducible variety of dimension $\dim X$, we mean removing this term from the $\mathbb Z$-linear combination or the union, if it appears, but leaving all other terms.

\begin{proof} Because $f$ is $SS'(K)$-transversal, it is $SS(K)$-transversal away from $x$, so $K$ is locally acyclic away from $f$ by the definition of the singular support, and thus $R \Phi_f K$ vanishes away from $x$, so $R \Psi_f K = i^* K$ away from $x$.

Furthermore, $i$ is properly $SS'(K)$-transversal by Lemma \ref{nearby-formula}.

Then by \cite[Theorem 7.6]{saito1} \[  CC ( R \Psi_{f} K) = CC( i^* K ) = i^! CC(K) = i^! CC'(K) \] away from $x$. 

Hence \[i^! CC'(K) -  CC ( R \Psi_{f} K) \] is a cycle on the cotangent bundle of $Z$ supported inside the cotangent space at $x$. Because these cycles are rational linear combinations of irreducible closed sets of dimension $\dim Z$, the difference is a multiple of the cotangent space at $x$. Because the cotangent space at $x$ has nonzero intersection number with the zero-section $Z$ of $T^* Z$, to check that \[ i^! CC'(K) = CC(R\Psi_f K),\] it suffices to check \[ (i^! CC'(K) , Z)_{T^*Z}=   (CC ( R \Psi_{f} K) , Z)_{T^*Z} . \] By the index formula \cite[Theorem 7 .13]{saito1},  \[ (CC ( R \Psi_{f} K) , Z)_{T^*Z} =  \chi(Z, R \Psi_f K) =\chi ( f^{-1}(\eta),K)\] for $\eta$ the generic point of $Y$. 

By definition, \[i^! CC'(K) = - (di)_* i^*  CC'(K).\] We have \cite[Proposition 8.1.1(c)]{Fulton}
\[  ((di)_* i^* CC'(K), Z)_{T^*Z} = (i^* CC'(K),  (di)^* Z)_{T^*X \times_X Z} = (CC'(K), i_* (di)^* Z) _{T^*X} .\]
For the first identity, this uses the fact that $di$ is finite on the support of $i^* CC'(K)$ and for the second identity this uses the fact that $i$ is a closed immersion, hence finite.

Now $(di)^ * Z \subseteq  T^* X \times_X Z$ consists of one-forms transverse to $Z$, so $i_* (di)^* Z$ is the conormal bundle of $Z$ inside $X$, $N^*Z$. 

As a point $y' \in Y$ varies, the conormal bundle to $f^{-1}(y')$ varies in a smooth family. To check that the intersection number  \[ (CC'(K), N^*f^{-1} (y') )_{T^*X} \] is constant, it suffices to check that the the intersection $ CC'(K) \cap N^*f^{-1} (y') $, viewed as a family of closed subsets parameterized by $y' \in Y$, and hence viewed as a scheme mapping to $Y$ given the induced reduced subscheme structure, is proper over $Y$. This is true because, as $f$ is $SS'(K)$-transversal, this intersection is contained in the zero-section, hence is proper.

The same is true for any other $y'$, and these conormal bundles vary in a smooth family, so this intersection number for $y$ is equal to the intersection number for any $y'$, and in particular for the generic point $\eta$. Let $i_\eta$ be the inclusion of the generic fiber of $f$ into $X$, then \[( i^! CC'(K), Z)_{T^*Z}  = - (CC'(K),  N^* f^{-1}(y))_{T^*X} = (CC'(K), N^* f^{-1}(\eta))_{T^*X}\] 
\[ = (i_\eta^! CC'(K), f^{-1}(\eta))_{T^* f^{-1}(\eta)}  =  (i_\eta^! CC(K), f^{-1}(\eta))_{T^* f^{-1}(\eta)} =  ( CC(i_\eta^* K) , f^{-1}(\eta))_{T^* f^{-1}(\eta)}\] \[= \chi(f^{-1}(\eta),K) \] as desired, where the first equality summarizes the previous calculations.\end{proof}

\begin{lemma}\label{genericity-lemma} Let $X$ be a smooth variety embedded in projective space $\mathbb P^n$. Let $C$ be a closed conical subset of the cotangent space of $X$ of dimension $\dim X$. Let $x \in X$ be a point such that $C$ does not contain the cotangent space of $x$.

Let $\overline{X} \subseteq X \times \mathbb P^1$ be a general pencil of conic sections of $X$, parameterized by $\mathbb P^1$.  Let $p$ and $q$ be the natural projections in the below commutative diagram.\begin{equation}\label{pq-diagram}\begin{tikzcd} \overline{X}  \arrow[ddr,"p"'] \arrow[drr,"q"] \arrow[dr]  \\  &X \times \mathbb P^1 \arrow[r] \arrow[d] & \mathbb P^1 \\ & X \end{tikzcd} \end{equation}

Then $p$ is properly $C$-transversal, $q$ is $p^{\circ} C$-transversal in a neighborhood of the unique conic in the pencil containing $x$, and the fibers of $p^\circ C$ over $\mathbb P^1$ are $\dim X-1$-dimensional in a neighborhood of the conic containing $x$. \end{lemma}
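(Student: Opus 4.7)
Proof plan. The plan is to show that each of the three conclusions defines a non-empty open condition on the parameter space of pencils. Write the pencil as $F_{[s:t]} = sH_0 + tH_\infty$ for two quadrics $H_0,H_\infty$, with base locus $B = X \cap H_0 \cap H_\infty$. For a general pair, standard Bertini arguments yield: $\overline{X}$ is smooth of dimension $\dim X$; $B$ is smooth of codimension $2$ in $X$; $x \notin B$, so $x$ lies on a unique conic $H_{t_0}$ of the pencil; $X \cap H_{t_0}$ is smooth at $x$; and $B$ is disjoint from every irreducible component $Z_i$ of the projection of $C$ to $X$ with $\dim Z_i \le 1$. In particular, $p$ is the blowup of $X$ along $B$ (the closure in $X \times \mathbb{P}^1$ of the graph of the rational map $y \mapsto [-H_0(y) : H_\infty(y)]$), and $p$ is étale off $p^{-1}(B)$.

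For the first conclusion, at a point $(b,t) \in p^{-1}(B)$, I would write $T^*\overline{X} = T^*(X \times \mathbb{P}^1)/\langle dF\rangle$ and note that $dF|_{(b,t)}$ has vanishing $dt$-component (since $H_\infty(b)=0$); hence $\ker(dp_{(b,t)}) \subseteq T^*_b X$ is the line spanned by $dH_0|_X(b) + t\, dH_\infty|_X(b)$. As $t$ varies over $\mathbb{P}^1$, these lines sweep out the projectivization of the $2$-plane $L_b = \langle dH_0|_X(b), dH_\infty|_X(b)\rangle$, so $p$ is $C$-transversal iff $L_b \cap C_b = \{0\}$ for every $b \in B$. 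Since $B$ meets each stratum $Z_i$ only for $\dim Z_i \ge 2$, and $\dim C_b = \dim X - \dim Z_i \le \dim X - 2$ on such strata, a parameter count over pencils shows this transversality is generic. For proper transversality, over $B$ the fiber of $p^*C$ has dimension $1 + (\dim X - \dim Z_i)$, making the corresponding irreducible component of $p^*C$ of total dimension $(\dim Z_i - 2) + 1 + (\dim X - \dim Z_i) = \dim X - 1 < \dim \overline{X}$; off $B$, $p$ is étale so $p^*C$ has dimension $\dim X$ as required.

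For the second and third conclusions, I would work in a neighborhood of $q^{-1}(t_0)$, where $p$ is étale (because $x \notin B$). Using the identity $dH_0 + t\, dH_\infty + H_\infty\, dt = 0$ on $\overline{X}$, the pulled-back $1$-form $q^*dt$ corresponds under the étale inverse $(dp)^{-1}$ to $-H_\infty(x')^{-1}\, dH_{t_0}|_X(x')$ at a point $(x',t_0)$. Consequently, $q$ is $p^\circ C$-transversal at $(x',t_0)$ precisely when either $dH_{t_0}|_X(x') = 0$ or $dH_{t_0}|_X(x') \notin C_{x'}$; that is, exactly when the inclusion $X \cap H_{t_0} \hookrightarrow X$ is $C$-transversal at $x'$. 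Similarly, the fiber of $p^\circ C$ over $t \in \mathbb{P}^1$ near $t_0$ is identified (via $p$) with the pullback of $C$ along $X \cap H_t \hookrightarrow X$, so its dimension is $\dim X - 1$ iff this inclusion is \emph{properly} $C$-transversal. Both conclusions thus reduce to showing that a general conic $H_{t_0}$ through $x$ gives a properly $C$-transversal inclusion.

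The main obstacle is verifying this last transversality at every $x' \in X \cap H_{t_0}$ simultaneously. The hypothesis $T^*_x X \not\subseteq C$ is what enables the argument at the point $x$: it makes $C_x$ a proper conical subset of $T^*_x X$, so a generic $1$-form at $x$ avoids $C_x$, and a generic conic through $x$ satisfies $dH_{t_0}|_X(x) \notin C_x$. Away from $x$, a Bertini-type count over the linear system of conics through $x$ supplies the required transversality, using that this linear system is still large enough to produce a generic $1$-form at each $x' \in X \cap H_{t_0}$ for which $T^*_{x'}X \not\subseteq C$, together with the observation that $X \cap H_{t_0}$ can be made to avoid the finite set of points $x' \neq x$ at which $T^*_{x'}X$ is a full-dimensional component of $C$.
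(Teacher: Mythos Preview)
Your approach is essentially the paper's: each of the three conclusions is established by a dimension count over the parameter space of pencils (or of conics through $x$), using that $p$ is the blow-up along the base locus $B$ and is \'etale off $B$. The reductions you make for the second and third conclusions---identifying $q$-transversality over $t_0$ with $C$-transversality of the inclusion $X\cap H_{t_0}\hookrightarrow X$---are exactly what the paper does.

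There is, however, a genuine error in your argument for \emph{proper} $C$-transversality of $p$. You compute that the part of $p^*C$ lying over $B$ has dimension $(\dim Z_i-2)+1+(\dim X-\dim Z_i)=\dim X-1$ and conclude ``as required''. But proper $C$-transversality demands that every irreducible component of $p^*C$ have dimension exactly $\dim\overline{X}=\dim X$; an irreducible component of dimension $\dim X-1$ would \emph{violate} this, not verify it. What is actually true is that no such small component exists: since $X$ is smooth, every irreducible component of the fiber product $C'\times_X\overline{X}$ has dimension at least $\dim C'+\dim\overline{X}-\dim X=\dim X$, while for a generic pencil no base $Z_i$ lies in $B$, so $\dim p^*C\le\dim X$. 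Together these force pure dimension $\dim X$. (The paper states the upper bound; the lower bound is the implicit dimension-theory input.) Note also that your count invokes $\dim C_b=\dim X-\dim Z_i$, which is only the generic fiber dimension of $C_i\to Z_i$ and need not hold at every $b\in B\cap Z_i$.

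A smaller point on the third conclusion: you only explicitly exclude the finitely many points $x'\neq x$ with $T^*_{x'}X\subseteq C$. For the dimension of the fiber of $p^\circ C$ over $t_0$ to be $\dim X-1$, one also needs that no component of $C$ with positive-dimensional base $Z_i$ has $Z_i\subseteq X\cap H_{t_0}$; this is automatic for a general conic through $x$, but should be said. The paper packages both cases together by noting that for a generic pencil the only subvarieties forced into a single member are points, which can then be arranged to lie in fibers other than $H_{t_0}$.
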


\begin{proof}

Let $Y$ be the base locus of this pencil of conics, which since the pencil is generic, is a smooth subscheme of codimension $2$ (by Bertini's theorem). We can view $p$ as the blow-up of $X$ along $Y$.

First we check that $p$ is $C$-transversal. The map $p$  is \'{e}tale, and automatically $C$-transversal, away from $Y$, and at each point over $Y$, $dp^{-1} ( \{0\})$ is one-dimensional and contained in the conormal space of $Y$. Thus to check that $p$ is $C$-transversal, it suffices to check that no point in $C$ consists of a point in $Y$ and a nonzero vector transverse to $Y$.  For each pair of a point and nonzero cotangent vector in $C$, the condition that the point be contained in $Y$ is a codimension $2$ condition on the pencil of conics, and the condition that the vector be transverse to $Y$ is a codimension $\dim X-2$ condition on the pencil of conics. Because the space of pairs of a point and a nonzero cotangent vector contained in $C$, up to dilation of the cotangent vector, is $\dim X-1$-dimensional, this occurring for any point is a codimension $1$ condition, hence is not generic.

Next we check that $p$ is properly $C$-transversal.  Because $\dim \overline{X} =\dim X$ and the fibers of $p$ have dimension at most one, $\dim p^* C = \dim C = \dim X = \dim \overline{X}$ unless the base of some irreducible component of $C$ lies entirely in $Y$. For any given variety, a generic $Y$ does not contain it, so this does not happen, and $p$ is properly $C$-transversal.

 Let $t$ in $\mathbb P^1$ be such that $x \in \overline{X}_t$. Now we check that $q$ is $p^\circ C$-transversal in a neighborhood of $X_t$.  Because $dq^{-1}  (p^\circ C)$ is a closed conical subset of $ \overline{X} \times_{\mathbb P^1} T^* \mathbb P^1$, and a closed conical subset being contained in the zero section is an open condition, $q$ being $p^\circ C$-transversal is an open condition.
 
 Thus it suffices to check that the restriction of $dq^{-1} (p^\circ C)$ to $X_t$ is contained in the zero section. Equivalently, we fix a nonzero one-form $\omega_0$ on $\mathbb P^1$ at $t$, so that $dq(\omega_0)$ generates the one-dimensional image of $dq$, and check that $dq(\omega_0)_z \notin p^\circ C$ at each point $y\in X_t$. 
 

If $y\in Y$, the image of $dp$ and $dq$ intersect only at zero, so $dq(\omega_0)_z \notin p^\circ C$. If $z\notin Y$,  $dq(\omega_0 )$ is the conormal vector to  $X_t$, because $X_t$ is a level set of $q$. Thus, to check that  $q$ is generically $p^\circ C$-transversal in a neighborhood of $\overline{X}_t$, it suffices to check that, for a general conic $\overline{X}_t$ through $x$, the conormal bundle to $\overline{X}_t$ never contains a pair of a point and a nonzero cotangent vector in $C$. 

For each point $y$ and nonzero cotangent vector $\omega_1$ in $C$, with $y\neq x$, the conics through $x$ whose conormal bundles contain $(y, \omega_1)$ form a codimension $\dim X$ subset of the conics through $x$, because this is a codimension one condition on the value of the conic at $y$ and a codimension $\dim X-1$ condition on the derivative of the conic at $y$, and the derivatives at $y$ are independent of the condition that the conic pass through $x$. Because the space of pairs of a point $y$ and a nonzero cotangent vector $\omega_1$ contained in $C$, up to dilation of $\omega_1$, is $\dim X-1$-dimensional, this is a codimension $1$ condition and is not generic. Over the point $x$, the conormal bundle of.a generic conic is a general cotangent line, so it remains to check that $C$ does not contain a general point of the cotangent space at $x$, which holds because we have assumed that the cotangent space at $x$ does not lie in $C$.

Because each irreducible component of $p^\circ C$ has dimension $\dim X$, the fibers over $\mathbb P^1$ have dimension $\dim X -1$ unless some irreducible component is contained entirely in one fiber, i.e. in a single conic in the pencil.  For a generic pencil of conics, the only variety that is necessarily contained in one fiber of the pencil is a single point, and because $C$ does not contain the cotangent space of $x$, none of these points are $x$, and so they will not generically be in the same fiber as $x$, and thus we can remove the fibers containing these points from our chosen neighborhood.
\end{proof}

\begin{lemma}\label{final-nearby-formula} Let $X$ be a smooth variety embedded in projective space $\mathbb P^n$ over a perfect field $k$. Let $K$ be an object of $D^b_c(X, \mathbb F_\ell)$.

Let $CC'(K)$ be $CC(K)$ with any occurrence of the cotangent space at $x$ removed, and let $SS'(K)$ be $SS(K)$ with any occurrence of the cotangent space at $x$ removed.

Let $\overline{X} \subseteq X \times \mathbb P^1$ be a general pencil of conic sections sections of $X$, parameterized by $\mathbb P^1$.  Let $p$ and $q$ be the projections $\overline{X} \to X$ and $\overline{X} \to \mathbb P^1$, respectively, as in Diagram \ref{pq-diagram}. 

Let $\overline{x} = p^{-1}(x)$, which, because the pencil is generic, is a single point, and let $y= q(\overline{x})$. Let $i : q^{-1}(y) \to \overline{X}$ be the inclusion of the fiber over $y$ into $\overline{X}$, so that we have the commutative diagram. \begin{equation}\label{pqi-diagram}\begin{tikzcd} \overline{X}  \arrow[ddr,"p"'] \arrow[drr,"q"] \arrow[dr]  &  &  q^{-1}(y) \arrow[ll, "i"'] \arrow[dr] \\  &X \times \mathbb P^1 \arrow[r] \arrow[d] & \mathbb P^1 & y \arrow[l]  \\ & X \end{tikzcd} \end{equation}

Then

\begin{enumerate}

\item \[ CC ( R \Psi_{q} p^*K) = i^! p^! CC'(K)\]

\item  $R\Phi_q p^* K$ is supported at $x$.

\item  \[-\operatorname{dimtot}  \left( R\Phi_q ( p^* K) \right)_x  \] is the multiplicity of the cotangent space at $x$ in $CC(K)$.

\item If $K$ is perverse, then  $\left( R\Phi_q ( p^* K) \right)_x $ is supported in degree $-1$.

\end{enumerate}
  \end{lemma}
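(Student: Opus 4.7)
The plan is to derive all four statements by applying Lemma \ref{nearby-formula} to the sheaf $p^* K$ on $\overline{X}$ with the map $q$ and the inclusion $i$, after checking the hypotheses via Lemma \ref{genericity-lemma} and invoking Saito's pullback formula for characteristic cycles. First, since $x \notin Y$ for a generic pencil, $p$ is a local isomorphism near $\overline{x}$, so $dp$ carries $T^*_x X$ isomorphically to $T^*_{\overline{x}} \overline{X}$; combined with the proper $SS'(K)$-transversality supplied by Lemma \ref{genericity-lemma}, this shows $p$ is even properly $SS(K)$-transversal. Hence \cite[Theorem 7.6]{saito1} gives $CC(p^* K) = p^! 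CC(K)$, and restricting to the components other than the cotangent fiber at $\overline{x}$ reads $CC'(p^* K) = p^! CC'(K)$ in a neighborhood of $\overline{x}$. Lemma \ref{genericity-lemma} further supplies the $SS'(p^* K)$-transversality of $q$ and the fiber dimension condition needed to invoke Lemma \ref{nearby-formula}, which yields
\[ CC(R\Psi_q p^* K) = i^! CC'(p^* K) = i^! p^! CC'(K), \]
giving part (1). Part (2) is immediate from the same transversality: $q$ is $SS(p^* K)$-transversal (hence locally acyclic for $p^* K$) at every point of the fiber other than $\overline{x}$, since $T^*_{\overline{x}} \overline{X}$ occurs in $SS(p^* K)$ only above $\overline{x}$, so $R\Phi_q(p^* K)$ vanishes outside $\overline{x}$.

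For part (3), part (2) makes $\overline{x}$ an at most isolated $SS(p^* K)$-characteristic point of $q$, and the defining identity of the characteristic cycle from \cite[Definition 5.10]{saito1} gives
\[ -\operatorname{dimtot}(R\Phi_q(p^* K))_{\overline{x}} = (CC(p^* K), (dq)^* \omega)_{T^* \overline{X}, \overline{x}} \]
for a one-form $\omega$ on $\mathbb P^1$ without zero or pole at $y$. Decompose $CC(p^* K) = p^! CC'(K) + m \cdot T^*_{\overline{x}} \overline{X}$, where $m$ is the multiplicity of $T^*_x X$ in $CC(K)$. The first summand contributes zero: the value of the section $(dq)^* \omega$ at $\overline{x}$ is a nonzero covector not lying in $SS'(p^* K)$ by the $SS'(p^* K)$-transversality of $q$, so the two cycles do not meet over $\overline{x}$. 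The second summand contributes $m$, since $T^*_{\overline{x}} \overline{X}$ is the entire cotangent fiber at $\overline{x}$ and meets the section $(dq)^* \omega$ transversally at a single point.

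For part (4), choose an open $\overline{V} \ni \overline{x}$ on which $p|_{\overline{V}}$ is an isomorphism onto its image, so that $p^* K|_{\overline{V}}$ is perverse. Gabber's theorem on the perversity of vanishing cycles along a morphism to a smooth curve, applied to $q|_{\overline{V}}$, shows that $R\Phi_{q|_{\overline{V}}}(p^* K|_{\overline{V}})[-1]$ is perverse on the relevant fiber, and by part (2) it is supported at the single point $\overline{x}$; a perverse sheaf supported at a point is concentrated in degree $0$, so $R\Phi_q(p^* K)$ is concentrated in degree $-1$ at $\overline{x}$. The main subtlety throughout is tracking how the cotangent space at $x$, although removed from $CC'(K)$, reappears in $CC(p^* K)$ as the full cotangent fiber over $\overline{x}$ and is precisely what produces the extra intersection contribution $m$ in part (3) and the degree-$(-1)$ concentration in part (4).
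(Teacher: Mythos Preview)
Your proof is correct and follows essentially the same approach as the paper: you use Lemma~\ref{genericity-lemma} to verify the transversality hypotheses, invoke Saito's pullback formula to get $CC(p^*K)=p^!CC(K)$, apply Lemma~\ref{nearby-formula} for part~(1), use the defining Milnor formula of the characteristic cycle for part~(3), and Gabber's perversity theorem for part~(4). The only cosmetic difference is that the paper cites \cite[Theorem~6.6]{saito1} rather than Theorem~7.6 for the identity $CC(p^*K)=p^!CC(K)$, and states $CC'(p^*K)=p^!CC'(K)$ globally rather than ``in a neighborhood of $\overline{x}$'' (which in fact holds globally since $p$ is properly $SS(K)$-transversal everywhere).
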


\begin{proof} To obtain (1), we apply Lemma \ref{nearby-formula} to $p^* K$ and $q$. By Lemma \ref{genericity-lemma}, $p$ is properly $SS'(K)$-transversal, and it is \'{e}tale at $x$ so it is properly $SS(K)$-transversal, so by \cite[Theorem 6.6]{saito1}, \[CC(p^* K) = p^!(CC(K))\] and thus \[CC'(p^* K) = p^!(CC'(K)).\] Then by Lemma \ref{genericity-lemma}, $p^* K$ satisifies the conditions of Lemma \ref{nearby-formula}, so \[ CC ( R \Psi_{q} p^*K) = i^! CC'(p^* K) = i^! p^! CC'(K) \] as desired. 

To obtain (2), by Lemma \ref{genericity-lemma}, $q $ is $ p^\circ SS(K)= SS(p^* K)$-transversal in a neighborhood of $x$, minus $x$, hence  $p^* K$ is locally $q$-acyclic away from $x$  by the definition of the singular support,  and thus $R \Phi_q p^* K$ is supported at $x$.

For (3), to calculate $R \Phi_q p^*K $, we use again the fact that $q $ is $ SS'(p^* K)$-transversal, so it is $SS(p^*K)$-transversal away from $x$, and thus $x$ is at most an isolated characteristic point, so by the definition of the characteristic cycle \[ -\operatorname{dimtot} (R \Phi_q p^* K)_x = (CC (p^* K), (dq)^* \omega)_x\] where $\omega$ is nonvanishing one-form on an open neighborhood of $y$ in $\mathbb P^1$.

Because $q$ is $SS'(p^*K)$-transversal, the only irreducible component of $SS(p^*K)$ which intersects $(dq)^* \omega $ is the cotangent space $N^* X$ at $x$. Because $(dq)^* \omega$ is a section of the cotangent bundle, $(N^*x, (dq)^* \omega)= 1$, so $(SS(p^*K), (dq)^* \omega)$ is the multiplicity of $N^*x$ in $CC(p^K)$, which is also the multiplicity of $N^* x$ in $CC(K)$.

For (4), because $p^*K$ is perverse near $x$, $p^* K[-1]$ is perverse near $x$ when restricted to the generic fiber of $q$, and so by the theorem of Gabber \cite[Corollary 4.6]{Illusie}, $R\Phi_q ( p^* K)[-1]$ is perverse (near $x$, and thus everywhere, because it vanishes elsewhere). Because it is perverse and supported at a single point, it is supported in degree $0$, and then the unshifted version is supported in degree $[-1]$.

\end{proof}

\begin{lemma}\label{multiplicity-hypersurface}  Let $X$ be a smooth variety embedded in projective space $\mathbb P^n$ over a perfect field $k$. Let $x$ be a point of $X$. Let $C$ be a conical cycle in the cotangent bundle of $X$.  Let $C'$ be $C$ minus any occurrence of the cotangent space at $x$.  Let $\inhyper{X}$ be the intersection of $X$ with a generic conic through $x$, let $j: \tilde{X} \to X$ be the inclusion, and let $\tilde{C} =-  j^{!} C'$.

Then for $i>0$, the $i$th polar multiplicity of $C$ at $x$ equals the $i-1$st polar multiplicity of $\inhyper{C}$ at $x$, and for $i=0$, the $i$th polar multiplicity of $C$ at $X$ equals the multiplicity of the cotangent space at $x$ in $C$. \end{lemma}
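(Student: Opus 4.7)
The plan is to treat the cases $i=0$ and $i>0$ separately, using Definition~\ref{polar-multiplicity-2} throughout.

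For $i=0$, I would take $Y=X$ and $V$ a generic line sub-bundle of $T^*X$, so that $\mathbb P(V)$ is a section of $\pi:\mathbb P(T^*X)\to X$. Decomposing $C=C'+m\cdot T^*_xX$ where $m$ is the multiplicity of the cotangent space at $x$, the section $\mathbb P(V)$ meets $\pi^{-1}(x)=\mathbb P(T^*_xX)$ transversally in one point, contributing $m$. For the part $\mathbb P(C')$, a generic section $\mathbb P(V)$ does not meet $\mathbb P(C')$ within $\pi^{-1}(x)$ since $\mathbb P(C')\cap\pi^{-1}(x)$ has dimension at most $n-2$. Hence $\gamma^0_C(x)=m$.

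For $i>0$, first argue, adapting the genericity reasoning of Lemma~\ref{genericity-lemma}, that the inclusion $j:\tilde X\hookrightarrow X$ of a generic conic through $x$ is properly $C'$-transversal. Then pick $Y\subseteq\tilde X$ a general smooth subvariety of codimension $i-1$ in $\tilde X$ (codimension $i$ in $X$) through $x$, and pick $V\subseteq T^*X|_Y$ a rank $i+1$ sub-bundle containing $N^*\tilde X|_Y$; set $\tilde V=V/N^*\tilde X|_Y\subseteq T^*\tilde X|_Y$, of rank $i$. A Schubert-type dimension count verifies that, although constrained, generic such $(Y,V)$ and the induced $(Y,\tilde V)$ satisfy the strict transform conditions of Definition~\ref{polar-multiplicity-2} on $X$ and $\tilde X$ respectively. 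Since the strict transform of the $T^*_xX$ component of $C$ in the blowup of $\mathbb P(T^*X)$ along $\pi^{-1}(x)$ is empty, this component contributes zero to $\gamma^i_C(x)$ for $i>0$, so $\gamma^i_C(x)=(\mathbb P(C'),\mathbb P(V))_{\mathbb P(T^*X),x}$.

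The remaining identification $(\mathbb P(C'),\mathbb P(V))_{\mathbb P(T^*X),x}=(\mathbb P(\tilde C),\mathbb P(\tilde V))_{\mathbb P(T^*\tilde X),x}$ follows from the projection formula applied to the vector-bundle surjection $dj:T^*X|_{\tilde X}\to T^*\tilde X$. Because $\mathbb P(V)$ is supported in the divisor $\mathbb P(T^*X|_{\tilde X})\subseteq\mathbb P(T^*X)$ and $j$ is properly $C'$-transversal, restricting the intersection to this divisor yields $(\mathbb P(j^*C'),\mathbb P(V))_{\mathbb P(T^*X|_{\tilde X}),x}$. The map $dj$ is finite on $j^*C'$ (by proper transversality), with $dj_*[j^*C']=[\tilde C]$ by the definition $\tilde C=-j^!C'$. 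The constraint $V\supseteq N^*\tilde X|_Y=\ker(dj)|_Y$ gives $V=dj^{-1}(\tilde V)$ set-theoretically, and flatness of $dj$ yields $dj^*[\tilde V]=[V]$ as cycles (both of codimension $n-2$ in their ambient bundles). The projection formula then gives $dj_*(j^*C'\cdot V)=\tilde C\cdot\tilde V$ as cycles on $T^*\tilde X$. This identity passes to projectivized intersection numbers because $\mathbb P(j^*C')\cap\mathbb P(N^*\tilde X)=\emptyset$: by $C'$-transversality of $j$, the kernel $N^*\tilde X$ of $dj$ meets $j^*C'$ only in the zero section. Hence $dj$ is regular along $\mathbb P(j^*C')$, and the intersection $\mathbb P(j^*C')\cap\mathbb P(V)\cap\pi^{-1}(x)$ pushes forward to $\mathbb P(\tilde C)\cap\mathbb P(\tilde V)\cap\pi^{-1}(x)$ with total multiplicity preserved.

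The main technical obstacle is this last step: transferring the projection formula from the total cotangent bundles to their projectivizations, where $dj$ induces only a rational map with indeterminacy $\mathbb P(N^*\tilde X)$. The $C'$-transversality of $j$ resolves this obstacle by placing the intersection locus in the regular locus of the rational map. A secondary technicality is confirming that, despite the constraint $V\supseteq N^*\tilde X|_Y$, generic such $V$ continues to satisfy the strict transform condition on $X$ (and that $\tilde V$ likewise satisfies it on $\tilde X$), which is a routine dimension count.
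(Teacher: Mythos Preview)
Your proposal is correct and follows essentially the same strategy as the paper. The $i=0$ case is identical. For $0<i<\dim X$, both you and the paper take $Y\subseteq\tilde X$ of codimension $i-1$, take $V=dj^{-1}(\tilde V)$ for a general rank-$i$ sub-bundle $\tilde V\subseteq T^*\tilde X|_Y$, and reduce the identity of intersection numbers to the projection formula for $dj$. The paper formalizes the passage $\mathbb P(T^*X|_{\tilde X})\dashrightarrow\mathbb P(T^*\tilde X)$ by blowing up along the section $s(\tilde X)=\mathbb P(N^*\tilde X)$ and working on the blowup, whereas you argue directly that $C'$-transversality keeps $\mathbb P(j^*C')$ away from the indeterminacy locus; these are equivalent formalizations of the same idea. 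The strict-transform dimension count you call ``routine'' is carried out in full in the paper and has a mild subtlety: one must track that the conormal direction $\omega$ to $\tilde X$ is itself a general vector, and that the constraints $\omega\in V_x$ and $(TY)_x\perp\omega$ interact correctly with the conditions $v_2\in V_x$ and $v_1\in(TY)_x$ to give total codimension $\dim X-1$. It is worth writing out.

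One omission: the case $i=\dim X$ does not fall under the intersection-number formulation of Definition~\ref{polar-multiplicity-2} and requires a separate check that the multiplicity of the zero section in $C$ equals that in $\tilde C$. The paper dispatches this in one line by observing that the only component of $C'$ whose image under $dj$ lies in the zero section of $T^*\tilde X$ is the zero section of $T^*X$ itself, and that a generic hypersurface section preserves this multiplicity.
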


\begin{proof} We split into three cases: $i=0$, $0< i < \dim X$, $i=\dim X$.

For $i=0$, in the definition of polar multiplicity we can let $Y =X$, with $V$ a rank one subbundle of the cotangent bundle, so $\mathbb P(V)$ is simply a section of $\mathbb P(T^* X)$. If we choose a general section, then the only irreducible component of $\mathbb P(C)$ it intersects at $x$ is the fiber over $x$, which it intersects with multiplicity the multiplicity of that fiber, which is the multiplicity of the cotangent space at $x$ in $C$.

For $0< i < \dim X$, let $Y$ be a general smooth subvariety of $\inhyper{X}$ of codimension $i-1$ passing through $x$ and let $V$ be a general $i$-dimensional sub-bundle of $T^* \inhyper{X}$ over $Y$. Let $V$ be the inverse image of $\inhyper{V}$ in the cotangent bundle of $X$. By Definition \ref{polar-multiplicity-2}, it suffices to check that \[( \mathbb P( \inhyper{C}), \mathbb P(\inhyper{V}))_{\mathbb P( T^* \inhyper{X}),x}  =(  \mathbb P(V) , \mathbb P(C) )_{\mathbb P(T^* X),x}\] and that $Y,V$ satisfies the conditions of Lemma \ref{intersection-comparison} for $X,C$.

Let $s \colon \inhyper{X} \to  \mathbb P(T^* X \times_X \inhyper{X})$ be the section of $\mathbb P(T^* X \times_X \inhyper{X})$ corresponding to the conormal line of $\inhyper{X}$. After replacing $X$ by a suitable neighborhood of $x$, we have a commutative diagram.
\begin{equation}  \begin{tikzcd}  \mathbb P( C' ) \times_X \tilde{X}  \arrow[d,"z" ]  \arrow[r, "t" ] & \mathbb P ( T^* \tilde{X})  \arrow[r,"e"] & \tilde{X} \\
  \mathbb P(T^*X \times_X \inhyper{X}  ) - s(\inhyper{X}) \arrow[r,"u" ] & \operatorname{Bl}_{ s( \inhyper{X} ) } \mathbb P(T^*X \times_X \inhyper{X}  ) \arrow[u,"\rho"] \arrow[r, "b"] &  \mathbb P(T^*X \times_X \inhyper{X}  )  \arrow[u] & \tilde{X} \arrow[l,"s" ] \arrow[ul] \end{tikzcd} \end{equation}
In this diagram, the maps $u, \rho, b$ arise from projective geometry - the blow up of a projective bundle at a section $s$ admits a projection map $\rho$ to the projectivization of the quotient vector bundle, and a map $u$ from the open complement of this section. The inclusion $z\colon   \mathbb P( C' ) \times_X \tilde{X}  \to \mathbb P(T^*X \times_X \inhyper{X}  ) - s(\inhyper{X}) $ exists because $s(x)$ is a general point of $\mathbb P( (T^*X)_x)$ and $C'$ does not contain the whole cotangent space of $X$ at $x$, so $s(x)$ is not contained in $\mathbb P(C')$ and so the image of $s$ is disjoint from $\mathbb P(C')$ over a neighborhood of $X$. We define $t$ as $\rho\circ u \circ x$ to make the diagram commute. 

Because $e\circ t$ is proper and $e$ is separated, $t$ is proper. 
 
By definition, $\tilde{C}$ is the pushforward from $T^* X \times_X \tilde{X}$ to $T^* \tilde{X}$ of the restriction of $C$ from $T^* X$ to $T^* X \times_X \tilde{X}$. Because this pushforward and pullback are compatible with taking quotients by $\mathbb G_m$, we have \[  \mathbb P( \inhyper{C})  = (\rho \circ u) _*  ( \mathbb P(C') \times_X \inhyper{X}) \] and \[(\rho\circ u)^* \mathbb P(\inhyper{V}) =(b \circ u)^*  \mathbb P(V)\]
so \cite[Proposition 8.1.1(c)]{Fulton} \[ ( \mathbb P( \inhyper{C}), \mathbb P(\inhyper{V}))_{\mathbb P(T^* \inhyper{X}), x}   
=  (  \mathbb P(C') \times_X \inhyper{X}  , \mathbb P(V))_{\mathbb P(T^* X)\times_{X} \inhyper{X},x }  = (\mathbb P(C'), \mathbb P(V))_{\mathbb P(T^* X)} \] where the pullback along the open immersion $(b\circ u)$ does not affect the intersection number since the intersection locus is a closed subset of $  \mathbb P(T^*X \times_X \inhyper{X}  ) - s(\inhyper{X})$.

%
%
%
%
%

Finally we have \[ (\mathbb P(C'), \mathbb P(V))_{\mathbb P(T^* X) } =(\mathbb P(C), \mathbb P(V))_{\mathbb P(T^* X) }\] because the difference between $\mathbb P(C)$ and $\mathbb P(C')$ is the fiber over $x$, and because $Y$ has codimension at least one we can perturb $\mathbb P(V)$ to not intersect this fiber.

Next we check the strict transform condition. The exceptional fiber of the blowup of $ T^* X$ at the cotangent space at $x$ is $\mathbb P ((TX)_x ) \times \mathbb P ( (T^* X)_x)$.  Let $\omega \in (T^*X)_x$ be a conomal vector to $X^*$. Note that $\omega$ is a general vector in $(T^*X)_X$, $(TY)_x$ is general among all $\dim X-i$-dimensional subspaces of $(TX)_x$ perpendicular to $\omega$, and $V_x$ is general among $i+1$-dimensional vector subspaces of $(T^*X)_x$ containing $\omega$. Let $Z$ be the intersection of the strict transform of $\mathbb P(C)$ with the exceptional divisor. We must check that the intersection of $ \mathbb P (  (TY)_x) \times P (V_x)$ with $Z$ vanishes.

The dimension of $Z$ is $\dim X-2$ because $Z$ has dimension one less than $\mathbb P(C)$, which itself has dimension one less than $C$, which has dimension $\dim X$.
So it suffices to prove that for $(v_1,v_2) \in Z \subset \mathbb P ( (T^*X)_x) \times \mathbb P( (TX)_x)$, the codimension of the space of triples $( \omega, (TY)_x, V_x)$ such that $v_1\in (TY)_x , ,v_2 \in V_x $ inside the space of all triples $(\omega, (TY)_x, V_x)$ is at least $\dim X-1$.



The pairs  $(\omega,  V_x)$ such that $\omega, v_2 \in V_x$ have codimension $\dim X - (i+1)$ in the space of pairs $(\omega, V_x)$ with $ \omega \in V_x$. (We can ignore $\omega$ for this calculation).

To have $v_1 \in (TY)_x$ and $(TY)_x$ perpendicular to $\omega$, we must have $v_1 \cdot \omega =0$. This is a codimension $1$ condition on $(\omega, V_x)$. To check this, note that the space of pairs $\omega, V_x$ with  $\omega, v_2 \in V_x$ is irreducible, so it suffices to check the function $v_1 \cdot \omega$ is not identically zero. We can do this by choosing $\omega$ with $v_1 \cdot \omega$ nonzero and then choosing $V_x$ to contain $\omega$ and $v_2$, using $\dim V_x= i+1 \geq 2$ since we have assumed $i>0$.

Over each pair $(\omega, V_x)$ with  $\omega, v_2 \in V_x$ and $v_1 \cdot \omega = 0$,  the $\dim X-i$-dimensional spaces $(TY_x)$ that contain $v_1$ and are perpendicular to $\omega$ have codimension $i-1$ among all $\dim X-i$-dimensional spaces perpendicular to $\omega$, for a total codimension of $\dim X - 1$.

This completes the case $0 < i < \dim X$.

 Finally, for $i = \dim X$, observe that any conical cycle whose projection to the cotangent space at $\inhyper{X}$ is the zero section was already the zero section, and any cycle whose restriction to a general hypersurface is the zero section was already the zero section.

\end{proof}

Recall the statement of Theorem \ref{first-Massey-intro}:

\begin{theorem}\label{first-Massey}Let $X$ be a smooth variety over a perfect field $k$ and let $\ell$ be a prime invertible in $k$. Let $K$ be a perverse sheaf of $\mathbb F_\ell$-modules on $X$. 

Then $\dim_{\mathbb F_\ell} \mathcal H^{-i}(K)_x$ is at most $i$th polar multiplicity of $CC(K)$ at $x$.  \end{theorem}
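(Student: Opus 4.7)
The plan is to induct on $n = \dim X$, using a general pencil of conics through $x$ to reduce to a smooth variety of dimension $n-1$, mirroring Massey's characteristic-zero strategy but replacing analytic functions by degree-two maps. The base case $n = 0$ is immediate, with both sides equal to $\dim_{\mathbb{F}_\ell} K_x$. For the inductive step, I may shrink $X$ to an affine neighborhood of $x$ and embed it in some $\mathbb{P}^N$, then apply Lemmas \ref{genericity-lemma} and \ref{final-nearby-formula} to obtain $p\colon \overline{X} \to X$, $q\colon \overline{X} \to \mathbb{P}^1$, $\overline{x} = p^{-1}(x)$, $y = q(\overline{x})$, and $i\colon \overline{X}_y \hookrightarrow \overline{X}$, with $p$ an isomorphism near $\overline{x}$.

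Applying the nearby/vanishing cycle triangle $(p^*K)_{\overline{x}} \to (R\Psi_q p^*K)_{\overline{x}} \to (R\Phi_q p^*K)_{\overline{x}} \xrightarrow{+1}$ to cohomology yields a long exact sequence; by Lemma \ref{final-nearby-formula}(4) the last term is concentrated in degree $-1$. For $i > 0$ this forces the injection
\[ \mathcal{H}^{-i}(K)_x \cong \mathcal{H}^{-i}((p^*K)_{\overline{x}}) \hookrightarrow \mathcal{H}^{-i}((R\Psi_q p^*K)_{\overline{x}}) = \mathcal{H}^{-(i-1)}((R\Psi_q p^*K[-1])_{\overline{x}}). \]
By Gabber's theorem, $R\Psi_q p^*K[-1]$ is perverse on the smooth $(n-1)$-dimensional variety $\overline{X}_y$, so the inductive hypothesis bounds the right-hand side by the $(i-1)$st polar multiplicity of $CC(R\Psi_q p^*K[-1])$ at $\overline{x}$. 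Combining Lemma \ref{final-nearby-formula}(1), the identity $CC(K[-1]) = -CC(K)$ (immediate from the defining equation of the characteristic cycle), and the fact that $p$ is \'etale near $\overline{x}$, this characteristic cycle agrees near $\overline{x}$ with the cycle $\tilde{C}$ of Lemma \ref{multiplicity-hypersurface}; that lemma then identifies its $(i-1)$st polar multiplicity with $\gamma^i_{CC(K)}(x)$.

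For $i = 0$, the long exact sequence reads
\[ \mathcal{H}^{-1}((R\Phi_q p^*K)_{\overline{x}}) \to \mathcal{H}^0(K)_x \to \mathcal{H}^0((R\Psi_q p^*K)_{\overline{x}}), \]
and the third term equals $\mathcal{H}^1(R\Psi_q p^*K[-1])_{\overline{x}}$, which vanishes because a perverse sheaf has no cohomology in positive degree. Hence $\mathcal{H}^0(K)_x$ is a quotient of $\mathcal{H}^{-1}((R\Phi_q p^*K)_{\overline{x}})$, and its dimension is at most $\operatorname{dimtot} \mathcal{H}^{-1}((R\Phi_q p^*K)_{\overline{x}})$; since the complex is concentrated in degree $-1$, this coincides with $-\operatorname{dimtot}(R\Phi_q p^*K)_{\overline{x}}$, which by Lemma \ref{final-nearby-formula}(3) is the multiplicity of the cotangent space at $x$ in $CC(K)$, i.e.\ $\gamma^0_{CC(K)}(x)$ by Lemma \ref{multiplicity-hypersurface}.

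The main obstacle is to ensure that all the geometric data fit together coherently inside a single inductive loop: that a generic pencil of conics produces a conic section through $x$ which is simultaneously generic enough for Lemma \ref{multiplicity-hypersurface}, and that the sign conventions in Saito's characteristic cycle, in $CC(R\Psi)$ via Lemma \ref{final-nearby-formula}(1), and in the cycle $\tilde{C}$ of Lemma \ref{multiplicity-hypersurface} all line up. Once these match, the preceding lemmas do the heavy lifting and the induction closes cleanly.
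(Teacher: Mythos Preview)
Your proposal is correct and follows essentially the same route as the paper: the same pencil-of-conics setup, the same distinguished triangle, the same split into the $i=0$ case (handled via Lemma~\ref{final-nearby-formula}(3) and the vanishing of $\mathcal H^0(R\Psi)$) and the $i>0$ case (handled via the injection into nearby cycles together with Lemmas~\ref{final-nearby-formula}(1) and~\ref{multiplicity-hypersurface}). The only cosmetic difference is that the paper phrases the induction as being on $i$ while you phrase it as induction on $\dim X$; since the inductive step in either reading passes to a perverse sheaf on a variety of one lower dimension, the two framings are equivalent.
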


\begin{proof} This is an \'{e}tale-local question, so we may assume that $X$ is a smooth projective variety by passing to an affine open subset and embedding into projective space, then extending $K$ to keep it perverse. In fact, we fix an embedding into projective space. 

This follows by induction on $i$. Let $p$ and $q$ be the map defined by a general pencil of conics, as in Diagram \ref{pq-diagram}. We have a distinguished triangle \[ p^* K \to  R \Psi_q p^* K \to R \Phi_q p^*  K  .\] Taking stalk cohomology at $x$, we have an exact sequence

\[ \mathcal H^{-i-1}( R \Phi_q p^*  K)_x \to \mathcal H^{-i} ( K)_x \to \mathcal H^{-i} ( R \Psi_q p^* K)_x \to  \mathcal H^{-i}( R \Phi_q p^*  K)_x .\]

Because $K$ is perverse, $p^*K$ is perverse in a neighborhood of $x$, and thus $R \Psi_q p^* K[-1]$ is perverse.

Thus for $i=0$, $ \mathcal H^0 ( R \Psi_q p^* K)_x $ vanishes and we have
\[ \dim \mathcal H^0(K)_x \leq \dim  (R^{-1}\Phi_q p^*K)_x \leq \operatorname{dimtot} (R^{-1}  \Phi_q p^* K)_x = -  \operatorname{dimtot} (R \Phi_q p^* K)_x \] which is at most the multiplicity of the cotangent space at $x$ in $CC(K)$ which by Lemma \ref{multiplicity-hypersurface} is the $0$th polar multiplicity of $CC(K)$ in $x$.

By Lemma \ref{final-nearby-formula}(4), $\mathcal H^{i-1}( R \Phi_q p^*  K)_x$ vanishes unless $i=0$, so the map \[ \mathcal H^i ( K)_x \to \mathcal H^i ( R \Psi_q p^* K)_x \] is injective unless $i=0$. Thus for $i>0 $, we have

\[ \dim \mathcal H^{-i} ( (p^*K)_x)  \leq \mathcal H^{-i} ( R \Psi_q p^* K)_x. \]

Because $R \Psi_q p^* K [-1]$ is perverse, we can apply the induction hypothesis, to see that $\dim  \mathcal H^{-i} ( R \Psi_q p^* K)_x$  is at most the $i-1$st polar multiplicity of $CC( R \Psi_q p^* K [-1])$ at $x$. By Lemma \ref{final-nearby-formula}(1), $CC( R \Psi_q p^* K [-1])$ is the projection to the cotangent space of the conic of the restriction to the conic of $CC'(K)$. By Lemma \ref{multiplicity-hypersurface}, the $i-1$st polar multiplicity of this is the $i$th polar multiplicity of $CC(K)$, verifying the induction step.\end{proof}

In characteristic zero, the inequality $(R\Phi_f K)_x \leq \operatorname{dimtot} (R \Phi_f K)_x$ would be an identity, and we could use the Morse inequalities to derive additional information about the Betti numbers of $K$, as Massey does in \cite[Corollary 5.5]{Massey}, but in our case the analogue of the Morse inequalities are unhelpful.

\begin{proof}[Proof of Corollary \ref{vanishing-intro}] In view of Theorem \ref{first-Massey} it suffices to check that for $i< \dim X - \dim (SS(K)_x)$, the $i$th polar multiplicity of $CC(K)$ at $x$ vanishes.  For $V$ a vector bundle of rank $i+1$, $\mathbb P(V)$ has dimension $i$ in the fiber of $0$, and $\mathbb P( CC(K))$, which is contained in $\mathbb P( SS(K))$, has codimension $\dim X - \dim (SS(K)_x)$ in the fiber at zero, so for a generic $V$ these do not intersect and their intersection number, which is the polar multiplicity, vanishes. \end{proof}

\section{Application to a conjecture of Shende and Tsimerman}
This section is devoted to proving \ref{theta-betti-bound}, following the strategy used by \citet{TS} to prove the characteristic zero analogue. To do this, we must first redo their calculation of the characteristic cycle in characteristic $p$, using Saito's definition of the characteristic cycle, and then explain why their estimate for the polar multiplicities of this cycle remains valid in characteristic $p$. 

While the argument is from a different perspective, and uses different notation in some parts, the ideas are essentially all due to Shende and Tsimerman. Because we are redoing the argument anyways, we take the opportunity to tighten up some of the inequalities.

Because the statement to prove is purely cohomological in nature, we work for simplicity over an algebraically closed field $k$. 

 Let $C$ be a smooth projective hyperelliptic curve over an algebraically closed field $k$, $\tau$ its hyperelliptic involution, $J$ its Jacobian, and $C^{(n)}$ the $n$th symmetric power, which we view as a moduli space of degree $n$ divisors.
 
 We can fix some degree $1$ divisor on $C$ which is equal to half the hyperelliptic class, and therefore identity the group of degree $n$ divisor classes on $C$ with $J$ for all $n$. In particular, once we have done this, for $P$ a point of $C$, the divisor class $[P+ \tau(P)]$ will equal the hyperelliptic class and thus vanish.

Let $\underline{A}^{a,b}: C^{(g-a)} \times C^{(g-b)} \to J$ be the map sending a pair $(D_1,D_2)$ of divisors to the divisor class $[D_1+D_2]$.

Note that the cotangent bundle of $J$ is a trivial bundle, and we can identify its fiber at any point as the vector space $H^0(C, K_C)$.  
  
 For natural natural numbers $w_1,w_2$ with $w_1+ w_2 \leq g$, consider the closed subset $Z_{w_1,w_2}$ of  $C^{ (w_1) } \times C^{ (w_2)} \times H^0(C, K_C) $ consisting of pairs $(D_1,D_2, \omega )$ with $D_1$ a divisor of degree $w_1$, $w_2$ a divisor of degree $w_2$, and $\omega$ a differential form on $C$ whose divisor of zeroes is at least $D_1+D_2+ \tau(D_1) + \tau(D_2)$. 
 
 Note that, because $K_C (- D_1- D_2-  \tau(D_1) - \tau(D_2) )$ is the pullback from $\mathbb P^1$ of a divisor of degree $g-1 -w_1-w_2$, \[\dim H^0 (C, K_C (- D_1- D_2-  \tau(D_1) - \tau(D_2) ) ) = g-w_1-w_2,\] and so $Z_{w_1,w_2}$ is smooth of dimension $g$.
  
 We can define a map $pr_{W_1,W_2}: Z_{w_1,w_2} \to T^* J$ by sending $(D_1,D_2, \omega)$ to $( [D_1+ 2D_2], \omega)$. Because $pr_{W_1, W_2}$ is proper, $pr_{w_1,w_2 *} [ Z_{w_1,w_2} ] $ is an algebraic cycle of codimension $g$ on $T^* J$. 

\begin{lemma}\label{ts-singular} The pushforward $ \underline{A}^{A,b}_{\circ} (C^{(g-a)} \times C^{(g-b)} ) $ of the zero-section of $T^* ( C^{(g-a)} \times C^{(g-b)} )$ is contained in the union of the zero section of $T^* J$ with the union over $w_1,w_2$ such that $w_1+w_2<g$ of the support of $pr_{W_1,W_2 *} [ Z_{w_1,w_2} ]$. \end{lemma}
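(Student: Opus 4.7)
A point of $\underline{A}^{a,b}_\circ(C^{(g-a)} \times C^{(g-b)})$ is, by unpacking the definitions, a pair $([D_1+D_2], \omega) \in T^*J \cong J \times H^0(C, K_C)$ with $(D_1, D_2) \in C^{(g-a)} \times C^{(g-b)}$ such that $\omega$ pulls back to zero under the codifferential of $\underline{A}^{a,b}$ at $(D_1, D_2)$. The kernel of this codifferential is $H^0(C, K_C(-D_1-D_2))$, so the vanishing condition is equivalent to $\operatorname{div}(\omega) \geq D_1 + D_2$. If $\omega = 0$ we land in the zero section of $T^*J$, so assume $\omega \neq 0$. Since $H^0(C, K_C)$ lies in the $(-1)$-eigenspace of $\tau^*$, the divisor $\operatorname{div}(\omega)$ is $\tau$-invariant, and a local computation at each Weierstrass point $W$ (using the uniformizer $t$ with $x = x(W) + t^2$) shows $\omega$ has even order of vanishing at $W$. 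Therefore $\operatorname{div}(\omega) = \pi^*B$ for a unique effective divisor $B$ on $\mathbb{P}^1$ of degree $g-1$.

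Under $\operatorname{div}(\omega) = \pi^*B$, the inclusion $\operatorname{div}(\omega) \geq D_1' + D_2' + \tau(D_1' + D_2')$ defining $Z_{w_1, w_2}$ simplifies at every fiber of $\pi$ to the single inequality $\pi_*(D_1' + D_2') \leq B$ on $\mathbb{P}^1$; in particular any $(D_1', D_2')$ in $Z_{w_1, w_2}$ automatically satisfies $w_1 + w_2 \leq \deg B = g-1 < g$. The task thus reduces to producing, given $D = D_1 + D_2 \leq \pi^*B$, an effective pair $(D_1', D_2')$ on $C$ satisfying $\pi_*(D_1' + D_2') \leq B$ and $[D_1' + 2 D_2'] = [D]$ in $J$.

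I plan to construct $(D_1', D_2')$ fiber-by-fiber over $\mathbb{P}^1$. Using the hyperelliptic identities $[\tau P_y]_J = -[P_y]_J$ at a non-branch point $y$ and $[W]_J \in J[2]$ at a Weierstrass point $W$, the class $[D_1' + 2 D_2' - D]_J$ decomposes into a sum of local contributions, one per $\tau$-orbit in $C$, so global class-matching reduces to local matching. At a non-branch $y$ with $(a_y, a'_y) := (m_{P_y}(D), m_{\tau P_y}(D))$, the local condition is $(\alpha_y - \alpha'_y) + 2(\beta_y - \gamma_y) = a_y - a'_y$ in non-negative integers with $(\alpha_y + \alpha'_y) + (\beta_y + \gamma_y) \leq m_y(B)$, where $(\alpha_y, \alpha'_y, \beta_y, \gamma_y) = (m_{P_y}(D_1'), m_{\tau P_y}(D_1'), m_{P_y}(D_2'), m_{\tau P_y}(D_2'))$. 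Setting $r_y := |a_y - a'_y| \bmod 2$ and $k_y := \lfloor |a_y - a'_y|/2 \rfloor$ (so the minimum total is $\lceil |a_y - a'_y|/2 \rceil$), the bound $a_y + a'_y \leq 2 m_y(B)$ coming from $D \leq \pi^*B$ gives $r_y + k_y \leq m_y(B)$, and an explicit integer solution exists. At each Weierstrass $W$, take $m_W(D_1') = a_W \bmod 2$ and $m_W(D_2') = 0$; the parity matches the contribution $a_W [W]_J$ to $[D]_J$ since $[W]_J$ is $2$-torsion, and this fits within $m_{\pi(W)}(B)$ because $a_W \leq 2 m_{\pi(W)}(B)$ forces $m_{\pi(W)}(B) \geq 1$ whenever $a_W$ is odd.

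The main anticipated obstacle is showing that these fiber-by-fiber choices glue to a global pair realizing the prescribed class $[D] \in J$, but this follows from the local-to-global decomposition of $[D_1' + 2 D_2' - D]_J$ noted above. Summing the totals, $w_1 + w_2 = \sum_{y \text{ non-branch}} (r_y + k_y) + \sum_W (a_W \bmod 2) \leq \sum_y m_y(B) = g-1$, so the constructed $(D_1', D_2')$ lies in $Z_{w_1, w_2}$ for some $w_1 + w_2 < g$ and $pr_{w_1, w_2}(D_1', D_2', \omega) = ([D], \omega)$, establishing the containment.
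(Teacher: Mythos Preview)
Your computation of the kernel of the codifferential is incorrect. Since $\underline{A}^{a,b}$ factors as $C^{(g-a)} \times C^{(g-b)} \to J \times J \xrightarrow{+} J$, its codifferential at $(D_1, D_2)$ sends $\omega \in H^0(C, K_C)$ to the pair $(\omega \bmod D_1,\, \omega \bmod D_2)$ in $H^0(K_C/K_C(-D_1)) \oplus H^0(K_C/K_C(-D_2))$. The kernel is therefore $H^0\bigl(C, K_C(-\max(D_1, D_2))\bigr)$, i.e.\ the condition on $\omega$ is $\div(\omega) \geq \max(D_1, D_2)$, not $\div(\omega) \geq D_1 + D_2$. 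Your assumed condition is strictly stronger whenever $D_1$ and $D_2$ share a point, so as written your argument only establishes the containment for a proper subset of $\underline{A}^{a,b}_\circ(C^{(g-a)} \times C^{(g-b)})$.

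The repair is minor and your fiber-by-fiber construction survives it. At a non-branch fiber $\{P_y, \tau P_y\}$, write $a_1, a_1'$ for the multiplicities of $D_1$ at $P_y, \tau P_y$ and $a_2, a_2'$ for those of $D_2$; the correct hypothesis gives only $a_1, a_1', a_2, a_2' \leq m_y(B)$ individually, whence $|a_y - a_y'| = |(a_1 - a_1') + (a_2 - a_2')| \leq 2m_y(B)$ still holds---though your intermediate claim $a_y + a_y' \leq 2m_y(B)$ does not. The Weierstrass case and the final bound $w_1 + w_2 \leq g-1$ go through unchanged. The paper's proof is essentially the same construction (cancel hyperelliptic pairs $P + \tau(P)$ from $D_1 + D_2$, then split the remainder into its odd and even parts), but it starts from the correct condition $\div(\omega) \geq \max(D_1, D_2)$ and verifies $\div(\omega) \geq D_1' + D_2' + \tau(D_1') + \tau(D_2')$ directly via the observation that if $D_1' + 2D_2'$ has multiplicity at least $2m-1$ at $P$ then one of $D_1, D_2$ already has multiplicity at least $m$ there.
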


\begin{proof} The inverse image $\left( d\underline{A}^{a,b}\right)^{-1}  (C^{(g-a)} \times C^{(g-b)} )$ is the set of pairs $(D_a, D_b, \omega)$ with $D_a$ a divisor of degree $g-a$, $D_b$ a divisor of degree $g-b$, and $\omega\in H^0(C, T^*C)$ such that $d\underline{A}^{a,b} (D_a,D_b) (\omega)$ vanishes, The pushforward of $\left( d\underline{A}^{a,b}\right)^{-1}   (C^{(g-a)} \times C^{(g-b)} )$ to $T^* J$ is the set of all pairs $( [D_a + D_b], \omega)$ where $[D_a + D_b]$ is the divisor class of $D_a+D_b$, such that $d\underline{A}^{a,b} (D_a,D_b) (\omega)$ vanishes. By Definition \ref{circ-forward}, this pushforward is $ \underline{A}^{A,b}_{\circ} (C^{(g-a)} \times C^{(g-b)} ) $.

Let us fix $(D_a,D_b,\omega)$ such that $d\underline{A}^{a,b} (D_a,D_b) (\omega)$ vanishes. We will show that $( [D_a + D_b], \omega)$ is contained in either $pr_{W_1,W_2 *} [ Z_{w_1,w_2} ]$ for some $w_1,w_2$ or the zero section.

We can represent the tangent space of  $C^{(a)}$ at $D_a$ as $H^0(C, \mathcal O(D_a) /\mathcal O)$ so that by Serre duality the cotangent space is $H^0(C, K_C / K_C(-D_a) )$. Then the derivative map \[ H^0(C, K_C) \to H^0(C, K_C / K_C (-D_a) ) \oplus H^0(C, K_C / K_C (-D_b) )\] is given by reducing a section modulo $D_a$ and $D_b$. Hence $\omega$ is in the kernel of $d\underline{A}^{a,b} (D_a,D_b) $ if and only if its divisor is greater than or equal to $D_a$ and also greater than or equal to $D_b$. Thus the divisor of $\omega$ is greater than or equal to $\max(D_a,D_b) $

Let $D' $ be obtained from $D_a+D_b$ by iteratively subtracting divisors of the form $[P + \tau (P)]$ until it is no longer possible to subtract divisors of the form $[P + \tau(P)]$ from $D'$ while keeping it effective. This means that there is no point $P$ for which $P$ and $\tau(P)$ are both in the support of $D'$, except possibly for points fixed by $\tau$, which must have multiplicity at most $1$. Let $D_1$ be the sum of all the points with odd multiplicity in $D'$ and let $D_2 = (D' -D_1)/2$.  Then by construction \[  [D_a + D_b] = [D' ] = [D_1 + 2D_2].\] 

Next, let us check that the divisor of $\omega$ is at least $D_1+ D_2 + \tau(D_1) + \tau(D_2) $

To do this, consider a point $P$ in the support of $D_1+ D_2 + \tau(D_1) + \tau(D_2) $ and let $m$ be the multiplicity of $D_1+ D_2 + \tau(D_1) + \tau(D_2) $ at $P$.

If $P$ is fixed by $\tau$, we can have $m$ at most $2$, and $m=0$ unless $D'$ vanishes at $P$. If $D'$ vanishes at $p$, then $D_a$ or $D_b$ vanishes at $P$, which means $\omega$ vanishes at $P$. Then $\omega$ must vanish to order $2$ at $P$ because global $1$-forms on a hyperelliptic curve are negated by the hyperelliptic involution and so vanish to even order at hyperelliptic points. So in either case, $\omega$ vanishes to order at least $m$ at $p$.

Otherwise, we cannot have both $P$ and $\tau(P)$ in the support of $D_1+D_2$, so either $P$ or $\tau(P)$ has multiplicity $m$ in $D_1+D_2$. Because the divisor of $\omega$ is symmetric, without loss of generality we can assume $P$ has multiplicity $m$ in $D_1+D_2$. Because the multiplicity of $D_1$ at $P$ is at most $1$, the multiplicity of $D_2$ is at least $m-1$, so the multiplicity of $D' = D_1+2 D_2$ is at least $2(m-1)+1=2m-1$. Thus the multiplicity of $D_a+D_b$ at $P$ is at least $2m-1$. Then the multiplicity of either $D_a$ or $D_b$ must be at least $\left\lceil \frac{2m-1}{2} \right\rceil =m$. Thus $\omega$ vanishes to order $m$ at $P$.

So in either case the order of vanishing of $\omega$ at $P$ is at least $m$, as desired.

So we have shown that the divisor of $\omega$ is at least $D_1+ D_2 + \tau(D_1) + \tau(D_2) $ and thus $(D_1,D_2, \omega)$ is a point of $Z_{w_1, w_2}$ where $w_1 = \deg D_1$ and $w_2 = \deg w_2$. Because $[D_1 +2D_2 ]= [D_a+D_b]$, it follows that  $( [D_a+ D_b] ,\omega) $ is the image of $(D_1,D_2, \omega)$ under $ pr_{w_1,w_2 }$.  Finally, note that if $\deg D_1 + D_2 \geq g$, then the divisor of $\omega$ is at least a divisor of degree at least $2g$. Thus $\omega$ vanishes and so $([D_a+D_b], \omega)$ is contained in the zero section. So $([D_a+D_b],\omega)$ is contained in either $pr_{w_1,w_2} (Z_{w_1,w_2})$ for $w_1+w_2<g$ or the zero section.

\end{proof}

\begin{lemma}\label{ts-smooth-cc} The characteristic cycle $CC ( \underline{A}_{a,b *} \mathbb Q_\ell [ 2g-a-b] )$ is equal to \[ \sum_{0 \leq w_1+ w_2 < g} m_{w_1,w_2,a,b}  pr_{w_1,w_2 *} [ Z_{w_1,w_2} ]  + m_{a,b} [ A]  \]

where $m_{w_1,w_2,a,b}$ is the coefficient of $v_1^{g-a} v_2^{g-b}$ in  \[ (v_1+ v_2+ v_1^2 v_2 + v_1 v_2^2 )^{w_1} (v_1v_2)^{w_2} ( 1  + v_1^2 + 2v_1 v_2 + v_2^2 + v_1^2 v_2^2 )^{g-1-w_1 -w_2} \]
and \[ |m_{a,b} | \leq 8^g .\]

 \end{lemma}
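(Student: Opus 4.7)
The plan is to invoke Saito's compatibility of the characteristic cycle with proper direct image for the proper morphism $\underline{A}^{a,b}\colon X = C^{(g-a)} \times C^{(g-b)} \to J$, which yields
\[ CC\bigl(\underline{A}^{a,b}_{*} \mathbb{Q}_\ell[2g-a-b]\bigr) = \underline{A}^{a,b}_{!}\,[T^*_X X]. \]
The hypothesis that $\underline{A}^{a,b}_{\circ}[T^*_X X]$ has dimension $\dim J = g$ is supplied by Lemma \ref{ts-singular}, which simultaneously confines the support of the right-hand side to the union of the zero section $A \subset T^*J$ with the varieties $pr_{w_1,w_2\,*}[Z_{w_1,w_2}]$ for $w_1 + w_2 < g$. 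Since each of these is irreducible of dimension $g$, $CC\bigl(\underline{A}^{a,b}_{*} \mathbb{Q}_\ell[2g-a-b]\bigr)$ must be a $\mathbb Z$-linear combination of them, and the task reduces to identifying each coefficient.

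To compute $m_{w_1, w_2, a, b}$ I work at a generic point of $Z_{w_1, w_2}$: a triple $(D_1, D_2, \omega)$ in which $D_1, D_2$ have disjoint simple support away from Weierstrass points and $\operatorname{div}(\omega) = D_1 + D_2 + \tau(D_1) + \tau(D_2) + R$ for a generic $\tau$-invariant residual $R = \sum_{i=1}^{g-1-w_1-w_2}(P_i + \tau(P_i))$. The preimage in $X \times_J T^*J$ mapping to $([D_1 + 2D_2], \omega) \in T^*J$ consists of pairs $(D_a, D_b)$ of effective subdivisors of $\operatorname{div}(\omega)$ of respective degrees $g-a, g-b$ satisfying $[D_a + D_b] = [D_1 + 2D_2]$ in $\operatorname{Pic}(C)$, and by genericity each such preimage is a transverse intersection contributing $1$, so the count of preimages equals $m_{w_1, w_2, a, b}$.

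Thanks to the hyperelliptic identity $[P + \tau(P)] = g^1_2$, the linear-equivalence constraint decouples into a local condition at each $\tau$-pair $\{P, \tau(P)\}$ in the support of $\operatorname{div}(\omega)$: the multiplicities of $(D_a + D_b) - (D_1 + 2D_2)$ at $P$ and at $\tau(P)$ must coincide. Enumerating admissible configurations and tracking the contribution to $(\deg D_a, \deg D_b)$, each of the $w_1$ pairs meeting $\operatorname{supp}(D_1)$ admits four configurations (from the local multiplicity patterns $(1,0)$ and $(2,1)$ of $D_a + D_b$ at $(P, \tau(P))$) summing to $v_1 + v_2 + v_1^2 v_2 + v_1 v_2^2$; each of the $w_2$ pairs meeting $\operatorname{supp}(D_2)$ forces $P$ into $D_a \cap D_b$, contributing $v_1 v_2$; and each of the $g - 1 - w_1 - w_2$ residual pairs admits five configurations summing to $1 + v_1^2 + 2 v_1 v_2 + v_2^2 + v_1^2 v_2^2$. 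Multiplying these factors and extracting the coefficient of $v_1^{g-a} v_2^{g-b}$ gives the stated formula; one also verifies the built-in parity constraint that only $w_1 \equiv a+b \pmod{2}$ produces a nonzero coefficient.

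The zero-section coefficient $m_{a,b}$ equals, up to a sign, the Euler characteristic of a generic fibre of $\underline{A}^{a,b}$, which for $a + b \leq g$ is a Brill–Noether-type closed subvariety of $X$ of dimension $g - a - b$ and for $a + b > g$ is empty. The main obstacle will be establishing $|m_{a,b}| \leq 8^g$: a crude bound on the sum of Betti numbers of such a fibre gives only something like $16^g$, so one needs a sharper geometric estimate exploiting the hyperelliptic structure, plausibly via a stratification of the fibre in which each stratum's Betti numbers are controlled by products of Betti numbers of projective bundles over sub-abelian varieties. The introduction's reference to Tsimerman's appendix suggests that precisely such a delicate geometric estimate is the sharp input required at this step.
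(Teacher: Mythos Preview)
Your overall strategy matches the paper's: invoke Saito's proper-pushforward formula (justified via Lemma~\ref{ts-singular}), confine the support to the $pr_{w_1,w_2*}[Z_{w_1,w_2}]$ and the zero section, and then extract the coefficients $m_{w_1,w_2,a,b}$ by a generating-function count at a generic $\omega$. The paper phrases this as pulling the whole cycle identity back along a generic section $\omega\in H^0(C,K_C)$ (so both sides become $0$-cycles on $J$ that one matches point-by-point), whereas you work at a generic point of each $Z_{w_1,w_2}$; these are the same computation. One small slip: the residual factor $1+v_1^2+2v_1v_2+v_2^2+v_1^2v_2^2$ records six configurations, not five.

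The genuine misstep is in your last paragraph. You correctly identify $m_{a,b}$ as, up to sign, the Euler characteristic of the generic fibre of $\underline{A}^{a,b}$, but you then treat the bound $|m_{a,b}|\le 8^g$ as the hard new input and point to Tsimerman's appendix. That is not where the bound comes from: the paper simply cites \cite[Proposition~3.16]{TS}, which already establishes this Euler-characteristic estimate (and the characteristic-$p$ case follows by lifting, since Euler characteristics are preserved). Tsimerman's appendix in the present paper is used elsewhere, namely in Lemma~\ref{individual-polar-bound}, to bound the \emph{multiplicity} of a polar subvariety of $J$ in terms of its intersection with a theta class; it plays no role in bounding $m_{a,b}$. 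So there is no ``main obstacle'' here---just a citation you have misplaced.
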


\begin{proof} By definition, $CC ( \mathbb Q_\ell [ 2g-a-b] )$ is equal to the zero-section. Hence by Lemma \ref{ts-singular},  $\underline{A}^{A,b}_{\circ} ( SS (\mathbb Q_\ell) ) $ is contained in the union of the zero section with $pr_{W_1,W_2 *} [ Z_{w_1,w_2} ]$ for $w_1+w_2<g$, and thus has dimension $\leq g$.  This verifies condition (2.20) of \cite[Theorem 2.2.5]{saito-direct}. The other conditions (that $J$ is projective, that $\underline{A}^{A,b}$ is quasi-projective and proper on the support of $\mathbb Q_\ell$, and that $\mathbb Q_\ell$ is constructible) are clear. Hence from \cite[Theorem 2.2.5]{saito-direct} we deduce \[CC  ( \underline{A}_{a,b *} \mathbb Q_\ell[2g-a-b] )= \underline{A}^{a,b}_! CC( \mathbb Q_\ell[2g-a-b]) = \underline{A}^{a,b}_! [ C^{(g-a)} \times C^{(g-b) }]. \]

To prove \[  \underline{A}^{a,b}_! [ C^{(g-a)} \times C^{(g-b) }] = \sum_{0 \leq w_1+ w_2 < g} m_{w_1,w_2,a,b}  pr_{w_1,w_2 *} [ Z_{w_1,w_2} ]  + m_{a,b} [ A]  ,\] let us first prove that the two sides become equal after we pull back by a general section $A \to T^* A$ coming from a general element $\omega  \in H^0(C, K_C)$. 

By a push-pull formula, the pullback of $\underline{A}^{a,b}_! [ C^{(g-a)} \times C^{(g-b) }]$ along $\omega$ is simply the pushforward along $\underline{A}^{a,b}$ of the pullback of $[ C^{(g-a)} \times C^{(g-b) }]$ along $d \underline{A}^{a,b} (\omega)$, which is the pushforward along $\underline{A}^{a,b}$ of the zero locus of $d \underline{A}^{a,b} (\omega)$. Because $\omega$ is general, it has $2g-2$ distinct zeroes forming $g-1$ orbits of size $2$ under $\tau$. Let $x_1,\dots, x_{g-1}, x_g,\dots, x_{2g-2}$ be these zeroes with $x_{g-1+i} = \tau(x_i)$. 

It follows that $(D_a, D_b)$ lies in the zero locus of $d \underline{A}^{a,b} (\omega)$ if and only if $D_a$ is the sum of a subset of size $a$ of these zeroes and $b$ is the sum of a subset of size $b$ of these zeroes. Furthermore the multiplicities of each of these pairs in the zero locus of  $d \underline{A}^{a,b} (\omega)$ must be one, as the sum of all the multiplicities must equal the topological Euler characteristic $\binom{2g-2 }{ a} \binom{2g-2 }{ b}$ of $C^{(a)} \times C^{(b)}$. Thus we can write \[ \omega^* ( \underline{A}^{a,b}_! [ C^{(g-a)} \times C^{(g-b) } ] ) = \sum_{ \substack{ S ,T \subseteq \{x_1,\dots, x_{2g-2}\} \\ |S| = a, |T|= b}} \left[ \sum_{x_i \in S} x_i + \sum_{x_i \in T} x_i \right] .\]

On the other hand, $\omega^*  pr_{w_1,w_2 *} [ Z_{w_1,w_2} ]$ is simply the pushforward from $C^{(w_1) } \times C^{(w_2)}$ to $J$ along the map $(D_1,D_2) \to [D_1+2D_2]$ of the set of $(D_1,D_2)$ with $|D_1|=w_1$, $|D_2|=w_2$, and such that $ D_1 + D_2 + \tau(D_1) + \tau(D_2)$ is at most the divisor of $\omega$.  This occurs when $D_1$ is a subset of $\{x_1,\dots, x_{2g-2}\}$ of size $w_1$, $D_2$ is a subset of  $\{x_1,\dots, x_{2g-2}\}$ of size $w_2$, and $D_1, D_2, \tau(D_1), \tau(D_2)$ are all disjoint. 

To match the two sides, we choose for each $S, T$ a pair $D_1, D_2$ such that $ \sum_{i \in S} x_i + \sum_{ i \in T} x_i  = D_1 + 2D_2$ and $D_1, D_2$ satisfy the stated conditions. To do this, observe that for each $i$ from $1$ to $g-1$, the linear combination of indicator functions
\[ 1_{ x_i \in S}+ 1_{x_i \in T} - 1_{x_{i+g-1} \in S} -1_{x_{i+g-1} \in T}\] takes the value $2, 1,0,-2,$ or $2$. If it is $2$, put $x_i$ in $D_2$. If it is $1$, put $x_i$ in $D_1$. If it is $-1$, put $x_{i+g-1}$ in $D_1$. If it is $-2$, put $x_{i+g-1}$ in $D_2$. If it is $0$, put neither $x_{i}$ nor $x_{i+g-1}$ in $D_1$ or $D_2$.

To prove the two pullbacks are equal, it suffices to prove that for any $(D_1,D_2)\subseteq \{x_1,\dots,x_{2g-2} $ with  $|D_1|=w_1$, $|D_2|=w_2$, and $D_1,D_2,\tau(D_1), \tau(D_2)$ all disjoint, the number of $S,T$ with $|S|=a, |T|=b$, where this process produces $D_1,D_2$, is $m_{w_1,w_2,a,b}$.  We use the standard generating functions approach to counting the number of ways to make a series of independent choices subject to linear constraints:

For any pair $(x_i,\tau(x_i))$, if $x_i \in D_2$, the tuple $(1_{x_i\in S}, 1_{x_i \in T}, 1_{\tau(x_i) \in S}, 1_{\tau(x_i)\in T})$ can only take the value $(1,1,0,0)$. We assign this value the term $v_1v_2$.

If $x_i \in D_1$, the tuple must take one of the four values $(1,0,0,0), (0,1,0,0), (1,1,1,0),(1,1,0,1)$. We assign these values the terms $v_1,v_2,v_1^2 v_2, $ and $v_1v_2^2$ respectively.

For $x_i \not \in D_1,x_i \not\in D_2, \tau(x_i)\not\in D_1, \tau(x_i)\not\in D_2$, the tuple must take one of the six values $(0,0,0,0),(1,0,1,0), (1,0,0,1),(0,1,1,0),(0,1,0,1),(1,1,1,1)$. We assign these values the terms $1,v_1^2, v_1v_2,v_1v_2, v_2^2,$ and $v_1^2v_2^2$ respectively.

Then by our system of assignments, each choice of $S,T$ where this process produces $D_1,D_2$ corresponds to a monomial in \[ (v_1+ v_2+ v_1^2 v_2 + v_1 v_2^2 )^{w_1} (v_1v_2)^{w_2} ( 1  + v_1^2 + 2v_1 v_2 + v_2^2 + v_1^2 v_2^2 )^{g-1-w_1 -w_2} \] and the ones with $|S| = a, |T|=b$ are exactly the monomials $v_1^a v_2^b$, so the coefficient of $v_1^a v_2^b$ is the number of $S,T$, as desired.

Now because the two cycles agree when pulled back to a general fiber of the projection to $H^0(C, K_C)$, they are equal modulo a sum of irreducible components whose projection to $H^0(C, K_C)$ is not dense. Because $ \underline{A}^{a,b}_! [ C^{(g-a)} \times C^{(g-b) }]$ is contained in $ \underline{A}^{a,b}_{\circ} ( C^{(g-a)} \times C^{(g-b) })$, by Lemma \ref{ts-singular}, these irreducible components must be contained in either $pr_{W_1,W_2} (Z_{w_1,w_2})$ or the zero section. Because $Z_{w_1,w_2}$ is irreducible of dimension at most $g$, the same properties hold for $pr_{W_1,W_2} (Z_{w_1,w_2})$, and so these irreducible components must equal either $pr_{W_1,W_2} (Z_{w_1,w_2})$ or the zero section. Because the projection of $Z_{w_1,w_2}$ to $H^0(C,K_C)$ is dense, the problematic  components cannot be $pr_{W_1,W_2} (Z_{w_1,w_2})$, so they must be the zero section.

To calculate the multiplicity of the zero-section in $CC ( \underline{A}_{a,b *} \mathbb Q_\ell [ 2g-a-b] )$, we notice that it is equal by definition to $(-1)^g$ times the Euler characteristic of the stalk of $\underline{A}_{a,b *} \mathbb Q_\ell [ 2g-a-b]$ at the generic point, which is $(-1)^{g+ a+b}$ times the topological Euler characteristic of the generic fiber of $\underline{A}_{a,b}$. This Euler characteristic is bounded in \citep[ Proposition 3.16]{TS} as at most $8^g$, giving our stated formula.

Note that when calculating this Euler characteristic, it does not matter if we work in characteristic zero or characteristic $p$, as we can lift everything in sight to characteristic zero, and the Euler characteristic is preserved by this lifting. \end{proof} 

We can factor $\underline{A}^{a,b}$ as the composition $mult \circ (\pi^a \times \pi^b)$ where $mult: J \times J \to J$ is the multiplication and $\pi_n : C^{(n)}\to J$ sends a divisor to its class. Let $\Theta_n$ be the image of $C^{(n)}$ under $\pi_n$, i.e. the set of degree $n$ divisor classes which are effective, and let $i^n$ be the inclusion of $\Theta_n$ into $J$.

Let $\Sigma^{a,b} = mult \circ (i^a \times i^b) $.

\begin{lemma}\label{ts-decomposition} For $0 \leq n \leq g$, \[ \pi^n_* \mathbb Q_\ell= \bigoplus_{r=0}^{n/2} i^{n+2r}_* \mathbb Q_\ell [-2r] (-r) .\] \end{lemma}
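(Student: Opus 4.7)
The plan is to compute $\pi^n_* \mathbb Q_\ell$ by identifying each of its cohomology sheaves via the hyperelliptic structure of $C$, and then invoking the decomposition theorem to assemble the shifted sheaves into the claimed direct sum.

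First I would analyze the fibers. Over $c\in J$, the fiber of $\pi^n$ is the complete linear system $|c+n\lambda|$, which on a hyperelliptic curve is classically isomorphic to $\mathbb P^r$, with $r = h^0(c+n\lambda)-1$ equal to the largest integer such that $c$ lies in the theta-stratum at ``depth $r$'' (the locus of classes from which $r$ copies of the hyperelliptic class $2\lambda$ can be extracted). Thus $\pi^n$ is semismall: the locus of fibers of dimension $\geq r$ has codimension exactly $2r$ in $J$ (matching the dimensional count $r + (n-2r) = n = \dim C^{(n)}$).

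Second, by proper base change, the stalk of $R^{2k}\pi^n_*\mathbb Q_\ell$ at $c$ is $H^{2k}(\mathbb P^r) = \mathbb Q_\ell(-k)$ for $r \geq k$ and vanishes otherwise; the odd direct images vanish. To see that $R^{2k}\pi^n_*\mathbb Q_\ell$ is actually the constant sheaf $\mathbb Q_\ell(-k)$ on the corresponding theta-stratum, note that the specialization maps between nearby fibers are induced by the hyperplane inclusion $\mathbb P^r \hookrightarrow \mathbb P^{r+1}$, which is an isomorphism on $H^{2k}$ for $r \geq k$ (the $k$-th power of the hyperplane class on $\mathbb P^{r+1}$ restricts to the corresponding power on $\mathbb P^r$). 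The identification $H^{2k}(\mathbb P^r) \cong \mathbb Q_\ell(-k)$ via the hyperplane class is canonical and varies continuously, yielding trivial monodromy on the (irreducible, being the image of a symmetric power of $C$) theta-stratum; therefore $R^{2k}\pi^n_*\mathbb Q_\ell = i^m_* \mathbb Q_\ell(-k)$ for the appropriate theta-locus index $m$.

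Third, I would invoke the Beilinson--Bernstein--Deligne--Gabber decomposition theorem. Since $\pi^n$ is proper with smooth source and semismall, $\pi^n_*\mathbb Q_\ell[n]$ is perverse and semisimple, hence a direct sum of IC sheaves supported on the closures of the strata. Proceeding by induction on $n$ (the base cases $n = 0, 1$ being immediate as $\pi^n$ is then an isomorphism onto its image), one identifies these IC summands with the constant-sheaf pushforwards $i^m_*\mathbb Q_\ell$ shifted appropriately: the inductive hypothesis applied to $\pi^{m}$ for $m < n$ expresses the constant sheaf on each lower-dimensional theta-locus as a telescoping sum of IC sheaves on even smaller theta-loci, and these fit together across all strata to yield the claimed decomposition.

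The main obstacle will be the reconciliation between the IC sheaves produced by BBDG and the stated constant-sheaf summands, since the theta-loci are singular for most values of the index, so that $i^m_*\mathbb Q_\ell$ is generally distinct from the IC sheaf of the corresponding theta-locus. The induction is designed to bridge this gap: the inductive formula effectively decomposes the constant sheaf $i^m_*\mathbb Q_\ell$ on a singular theta-locus into IC sheaves of smaller theta-loci, and piecing these telescoping decompositions together across all strata of $\pi^n$ gives exactly the direct sum in the lemma statement.
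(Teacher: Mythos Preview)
The paper gives no argument of its own here; it simply cites two external references (Iyer--Yang and Shende--Tsimerman). Your proposal is essentially the proof those references carry out: identify the fibers of $\pi^n$ as projective spaces via the hyperelliptic structure, verify semismallness, compute each $R^{2k}\pi^n_*\mathbb Q_\ell$ as the constant sheaf on the appropriate theta-locus, and split the complex using the decomposition theorem together with induction on $n$.

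You have also correctly isolated the one genuine subtlety --- that the semismall decomposition produces IC sheaves on the (singular) theta-loci rather than the constant-sheaf pushforwards appearing in the statement --- and your inductive scheme does resolve it. To make that step precise: the semismall decomposition for $\pi^n$, combined with the inductive hypothesis applied to $\pi^{n-2}$ (and a twist), gives
\[
\pi^n_*\mathbb Q_\ell[n] \;\cong\; IC(\Theta_n) \oplus \bigoplus_{r\geq 1} i^{n-2r}_*\mathbb Q_\ell[n-2r](-r).
\]
Taking ordinary cohomology sheaves of both sides and comparing with your direct-image computation $R^{2k}\pi^n_*\mathbb Q_\ell = i^{n-2k}_*\mathbb Q_\ell(-k)$ forces $\mathcal H^{-n+2k}(IC(\Theta_n)) = 0$ for all $k\geq 1$ and $\mathcal H^{-n}(IC(\Theta_n)) = i^n_*\mathbb Q_\ell$, so in fact $IC(\Theta_n) = i^n_*\mathbb Q_\ell[n]$. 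This closes the induction (and incidentally shows that each $\Theta_n$ is an $\ell$-adic rational homology manifold).
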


\begin{proof} This is obtained as part of the proof of \cite[Lemma 2.9]{IY} or \cite[Lemma 3.1]{TS}. \end{proof}

\begin{lemma}\label{ts-singular-cc}  The characteristic cycle $CC ( \Sigma^{a,b}_* \mathbb Q_\ell[2g-a-b] )$ is equal to \[ \sum_{0 \leq w_1+ w_2 < g} m'_{w_1,w_2,a,b}  pr_{w_1,w_2 *} [ Z_{w_1,w_2} ]  + m'_{a,b} [ A]  \]

where $m_{w_1,w_2,a,b}$ is the coefficient of $v_1^{g-a} v_2^{g-b}$ in \[ (1 -v_1^{-2} ) (1-v_2^{-2} ) (v_1+ v_2+ v_1^2 v_2 + v_1 v_2^2 )^{w_1} (v_1v_2)^{w_2} ( 1  + v_1^2 + 2v_1 v_2 + v_2^2 + v_1^2 v_2^2 )^{g-1-w_1 -w_2} \]

and \[|m'_{a,b}| \leq 4 \cdot 8^g.\] \end{lemma}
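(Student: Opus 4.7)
The plan is to extract $\Sigma^{a,b}_*\mathbb Q_\ell[2g-a-b]$ as a distinguished direct summand of the decomposition of $\underline A^{\cdot,\cdot}_*\mathbb Q_\ell[\cdot]$ given by Lemma~\ref{ts-decomposition}, invert to get an identity in the Grothendieck group, and apply Lemma~\ref{ts-smooth-cc} termwise.

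Factoring $\underline A^{a,b}=mult\circ(\pi^{g-a}\times\pi^{g-b})$ and applying Lemma~\ref{ts-decomposition} to each factor, together with the identification $mult_*(i^{\alpha}_*\mathbb Q_\ell\boxtimes i^{\beta}_*\mathbb Q_\ell)=\Sigma^{\alpha,\beta}_*\mathbb Q_\ell$, yields a direct-sum expression
\[
\underline A^{a,b}_*\mathbb Q_\ell \;=\; \bigoplus_{r,s\ge 0}\Sigma^{\ast,\ast}_*\mathbb Q_\ell[-2r-2s](-r-s),
\]
where the summand indices are obtained from $(a,b)$ by shifts of $\pm 2$ determined by Lemma~\ref{ts-decomposition}; with the correct bookkeeping, the $(r,s)=(0,0)$ summand is $\Sigma^{a,b}_*\mathbb Q_\ell$, so that two-dimensional M\"obius inversion on $\mathbb Z_{\ge 0}^2$ expresses $[\Sigma^{a,b}_*\mathbb Q_\ell[2g-a-b]]$ as an alternating sum of four classes $[\underline A^{\cdot,\cdot}_*\mathbb Q_\ell[\cdot]]$.

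Applying $CC$ (which is additive, invariant under Tate twists, and invariant under even shifts) and substituting Lemma~\ref{ts-smooth-cc} into each of the four $\underline A$-terms converts the coefficient of $pr_{w_1,w_2*}[Z_{w_1,w_2}]$ into a four-term alternating combination of coefficients of $P(v_1,v_2)$ at monomials differing by $v_1^{\pm 2},v_2^{\pm 2}$. The elementary polynomial identity
\[
c_{i,j}-c_{i+2,j}-c_{i,j+2}+c_{i+2,j+2} \;=\; [v_1^iv_2^j]\,(1-v_1^{-2})(1-v_2^{-2})\,P,
\]
with $c_{i,j}=[v_1^iv_2^j]P$, then assembles this into the claimed coefficient of $v_1^{g-a}v_2^{g-b}$ in $(1-v_1^{-2})(1-v_2^{-2})P$.

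The zero-section multiplicity is handled similarly: the same four-term identity, applied to the zero-section coefficients $m_{\cdot,\cdot}$ of Lemma~\ref{ts-smooth-cc}, expresses $m'_{a,b}$ as a signed sum of four $m_{\cdot,\cdot}$; since each satisfies $|m_{\cdot,\cdot}|\le 8^g$, the triangle inequality gives $|m'_{a,b}|\le 4\cdot 8^g$. The only delicate step is the bookkeeping: matching the direction of the shifts in the decomposition of Lemma~\ref{ts-decomposition} (equivalently, the direction of the chain $\Theta_{n-2}\subseteq\Theta_n$) to the four specific shifts in the M\"obius inversion so that the polynomial identity produces exactly the factor $(1-v_1^{-2})(1-v_2^{-2})$ at the monomial $v_1^{g-a}v_2^{g-b}$ stated in the lemma.
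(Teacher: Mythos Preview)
Your proposal is correct and follows essentially the same route as the paper: decompose $\underline A^{a,b}_*\mathbb Q_\ell$ via Lemma~\ref{ts-decomposition}, M\"obius-invert to write $CC(\Sigma^{a,b}_*\mathbb Q_\ell)$ as a signed sum of four $CC(\underline A^{\cdot,\cdot}_*\mathbb Q_\ell)$, plug in Lemma~\ref{ts-smooth-cc}, and read off the factor $(1-v_1^{-2})(1-v_2^{-2})$ and the bound $4\cdot 8^g$. Your explicit polynomial identity and your caution about the direction of the shifts are exactly the bookkeeping the paper leaves implicit.
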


\begin{proof} We have \[ \underline{A}^{a,b}_* \mathbb Q_\ell = mult_*  (\pi^a \times \pi^b)_* \mathbb Q_\ell = mult_*\left(  \pi^a_* \mathbb Q_\ell\boxtimes \pi^b_* \mathbb Q_\ell \right) \] \[= mult_* \left( \left( \bigoplus_{r=0}^{n/2-a} i^{a+2r}_* \mathbb Q_\ell [-2r] (-r)  \right) \boxtimes \left( \bigoplus_{s=0}^{n/2-b} i^{b+2s}_* \mathbb Q_\ell [-2s] (-s)  \right)\right) \] \[= \bigoplus_{r=0}^{n/2-a} \bigoplus_{s=0}^{n/2-b}  mult_* (i^{a+2r} \times i^{b+2s})_* \mathbb Q_\ell [-2r-2s](-r-s) =\bigoplus_{r=0}^{n/2-a} \bigoplus_{s=0}^{n/2-b}  \Sigma^{a+2r, b+2s}_* \mathbb Q_\ell  [-2r-2s](-r-s)\] and thus
\[ CC( \underline{A}^{a,b}_* \mathbb Q_\ell ) = \sum_{r=0}^{n/2-a} \sum_{s=0}^{n/2-b} CC(  \Sigma^{a+2r, b+2s}_* \mathbb Q_\ell) \]
and so solving for $CC ( \Sigma^{a,b}_* \mathbb Q_\ell)$ we get \[ CC ( \Sigma^{a,b}_* \mathbb Q_\ell) = CC( \underline{A}^{a,b}_* \mathbb Q_\ell ) - CC( \underline{A}^{a+2,b}_* \mathbb Q_\ell )-CC( \underline{A}^{a,b+2}_* \mathbb Q_\ell )-CC( \underline{A}^{a+2,b+2}_* \mathbb Q_\ell )\] and then the claim follows from Lemma \ref{ts-smooth-cc}. \end{proof}

\begin{lemma}\label{individual-polar-bound} For a line bundle $E$ in $J$ and $0\leq i < g-1$, the $i$th polar multiplicity of $pr_{w_1,w_2 *} [ Z_{w_1,w_2} ]$ at $E$ is at most

 \[ 2^{w_1 +w_2} { g \choose i} \sum_{\substack{c+d = w_1+w_2 -i \\ c \leq w_1, d\leq w_2 }} {g-i-1 \choose c,d,g-1-w_1-w_2} 2^{w_2-d} {w_1 + w_2 -c -d \choose w_1 -c }.\]    \end{lemma}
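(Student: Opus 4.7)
The plan is to reduce the computation of the $i$th polar multiplicity to an intersection problem on $\mathbb P(\mathcal V)$, the projective bundle over $C^{(w_1)} \times C^{(w_2)}$ whose fiber over $(D_1, D_2)$ is $\mathbb P(H^0(C, K_C(-D_1 - D_2 - \tau D_1 - \tau D_2)))$, then reduce further to $\mathbb P^{w_1} \times \mathbb P^{w_2}$ using the hyperelliptic involution, and finally to expand the resulting characteristic-class product combinatorially.

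By Definition \ref{polar-multiplicity-2} and the triviality of $T^*J = J \times H^0(C, K_C)$, I may take a generic smooth codimension-$i$ subvariety $Y \subset J$ through $E$ and the rank-$(i+1)$ subbundle to be $V = Y \times W$ for a generic $(i+1)$-dimensional $W \subset H^0(C, K_C)$. Since $pr_{w_1,w_2}$ is proper, the projection formula \cite[Proposition 8.1.1(c)]{Fulton} lifts the intersection to $\mathbb P(\mathcal V)$:
$$ (pr_{w_1,w_2*}[Z_{w_1,w_2}],\, \mathbb P(V))_{\mathbb P(T^*J),\, E} \;=\; ([\mathbb P(\mathcal V)],\, pr_{w_1,w_2}^{*}\mathbb P(V))_{\mathbb P(\mathcal V),\, pr_{w_1,w_2}^{-1}(E)}. $$
The right-hand side counts triples $(D_1, D_2, [\omega])$ satisfying the codimension-$i$ condition $[D_1 + 2D_2] \in Y$ on the base $C^{(w_1)} \times C^{(w_2)}$ and the codimension-$(g-1-i)$ condition $[\omega] \in \mathbb P(W)$ on the fibers, weighted by the refined-intersection multiplicity; since $\dim \mathbb P(\mathcal V) = g - 1$ matches the total codimension, this is a count of weighted points.

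Next, I exploit the hyperelliptic structure: $K_C(-D - \tau D)$ is pulled back from $\mathbb P^1$, so $\mathcal V$ is pulled back from a rank $g - w_1 - w_2$ bundle $\mathcal V_0$ on $\mathbb P^{1\,(w_1)} \times \mathbb P^{1\,(w_2)} \cong \mathbb P^{w_1} \times \mathbb P^{w_2}$ along the degree-$2^{w_1+w_2}$ symmetrized cover $\pi^{(w_1)} \times \pi^{(w_2)}$; pushing down through this cover produces the factor $2^{w_1 + w_2}$ in the bound. I then bound the remaining intersection on $\mathbb P^{w_1} \times \mathbb P^{w_2}$ by expanding a Segre/Chern class product in the monomial basis. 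The class representing the condition $[D_1 + 2D_2] \in Y$ becomes, after pushforward to $\mathbb P^{w_1}\times \mathbb P^{w_2}$, a bounded multiple of the $i$th power of an ample pulled-back class whose total degree yields $\binom{g}{i}$; the Segre class of $\mathcal V_0$ together with the condition $[\omega] \in \mathbb P(W)$ contributes the multinomial $\binom{g-i-1}{c, d, g-1-w_1-w_2}$, indexed by the partition of the $g-i-1$ zeros of a generic section of $W$ into those absorbed by $\pi(D_1)$ (count $c$), by $\pi(D_2)$ (count $d$), and the remaining $g-1-w_1-w_2$. The coefficient $2$ in the expression $[D_1 + 2D_2]$ acts asymmetrically on the two factors of the base, contributing the factor $2^{w_2-d}$ (but no matching factor $2^{w_1-c}$), while $\binom{w_1+w_2-c-d}{w_1-c}$ enumerates the distributions of unmatched zeros between the $D_1$ and $D_2$ copies.

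The main obstacle is this last combinatorial step: identifying the correct Chern-class expression, pushing it down through the hyperelliptic cover, and matching the resulting expansion term-by-term to the formula in the statement. Particularly delicate is the justification of inequality rather than equality, which accommodates excess components of the refined intersection cycle where $W$ fails to be fully transverse to $\mathcal V$ at special pairs $(D_1, D_2)$; these excess contributions are controlled by comparison with the generic intersection number. The argument is a careful but essentially direct transcription of the characteristic-zero calculation of Shende--Tsimerman \cite{TS} into the projective-bundle framework, whose use is necessary in characteristic $p$ because the characteristic cycle is no longer determined by its support.
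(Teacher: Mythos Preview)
Your outline captures the first half of the paper's argument correctly: one lifts the local intersection to the projective bundle $\mathbb P(\mathcal V)$ over $C^{(w_1)}\times C^{(w_2)}$, and the hyperelliptic structure identifies $\mathbb P(\mathcal V)$ with $C^{(w_1)}\times C^{(w_2)}\times (C/\tau)^{(g-1-w_1-w_2)}$, so that the condition $[\omega]\in\mathbb P(W)$ becomes a linear condition on $(C/\tau)^{(g-1)}\cong\mathbb P^{g-1}$. The paper then cites \cite[Lemma 3.22]{TS} for the cycle class of the resulting pushforward to $J$; this is exactly the Segre-type computation you sketch, and it accounts for the factor $2^{w_1+w_2-i}$, the multinomial, and the remaining binomial and power of $2$.

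The genuine gap is in your treatment of the factor $\binom{g}{i}$ and the passage from local to global. The polar multiplicity is the multiplicity \emph{at the particular point $E$} of the cycle $\pi_*(\mathbb P(C)\cap\mathbb P(V))$ (Definition~\ref{polar-multiplicity}), and the Shende--Tsimerman calculation only gives the \emph{cycle class} of this pushforward, a multiple of $[\Theta_i]$. Bounding the multiplicity of an $i$-dimensional subvariety of $J$ at an arbitrary (possibly very special) point by its intersection number with $[\Theta_{g-i}]$ is not formal: it is precisely the content of Theorem~\ref{appendix}, contributed by Tsimerman in the appendix, which gives multiplicity $\leq 2^{i-1}\,([V],[\Theta_{g-i}]) = 2^{i-1}\binom{g}{i}$ for hyperelliptic $C$. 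Your claim that ``the condition $[D_1+2D_2]\in Y$ becomes \dots\ a bounded multiple of the $i$th power of an ample pulled-back class whose total degree yields $\binom{g}{i}$'' is where this step is hidden, and it is not justified: pushing the local intersection at $E$ down to $\mathbb P^{w_1}\times\mathbb P^{w_2}$ does not convert it into a global intersection number, and your diagnosis of the inequality as coming from ``excess components where $W$ fails to be transverse'' misidentifies the source. The inequality is the local-versus-global multiplicity bound of Theorem~\ref{appendix}, and without it the argument does not close.
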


\begin{proof}  We apply Definition \ref{polar-multiplicity}. 

Let us take $L \subseteq  H^1(C, \mathcal O_C)$ a generic subspace of rank $g-i-1$, and $L^\vee \subseteq H^0(C, K_C)$ its perpendicular space of dimension $i+1$. Let $V$ on $J$ be the constant vector bundle $L^\vee$. Then $V_x = L^\vee$ is a generic subspace. By Definition \ref{polar-multiplicity}, the $i$th polar multiplicity of $pr_{w_1,w_2 *} [ Z_{w_1,w_2} ]$ at $x$ equals the multiplicity of \[ \pi_* ( \mathbb P ( pr_{w_1,w_2 *} [ Z_{w_1,w_2} ] ) \cap \mathbb P(V)) \] at $x$.

Let $ \mathbb P ( Z_{w_1,w_2} )$ be the moduli space of triples $(D_1,D_2,\omega)$ with $D_1 \in C^{(w_1)}, D_2\in C^{(w_2)}, \omega \in \mathbb P ( H^0(C, K_C))$ and let $pr_{w_1,w_2}' $ be the projection to $\mathbb P (T^* J)$ sending $(D_1,D_2,\omega)$ to $([D_1+2D_2],\omega)$. Then we have \[ \mathbb P ( pr_{w_1,w_2 *} [ Z_{w_1,w_2} ] ) = pr'_{w_1,w_2 *} [\mathbb P(Z_{w_1,w_2} )] \] so 
\[ \pi_* ( \mathbb P ( pr_{w_1,w_2 *} [ Z_{w_1,w_2} ] ) \cap \mathbb P(V)) = \pi_* (  pr'_{w_1,w_2 *} [\mathbb P(Z_{w_1,w_2} )] ) \cap \mathbb P(V) =( \pi \circ pr'_{w_1,w_2})_* (  pr_{w_1,w_2}^{'*} \mathbb P(V) ).\]

We can view the cycle $\mathbb P(V)$ as the pullback of $\mathbb P(L^\perp)$ from $\mathbb P(H^0(C,K_C))$, so we can view $pr_{w_1,w_2}^{'*} \mathbb P(V)$ as the pullback of $\mathbb P(L^\perp)$ under the projection $\sigma: \mathbb P ( Z_{w_1,w_2} )\to \mathbb P(H^0(C,K_C))$.

For $\omega \in H^0(C, K_C)$, $\div(\omega)$ is the pullback from $(C/\tau)= \mathbb P^1$ of a divisor on $\mathbb P^1$, so $\div(\omega)- D_1 -\tau(D_1) - D_2 -\tau(D_2)$ is the pullback from $(C/\tau)$ of a divisor of degree $g-1-w_1 -w_2$. This divisor uniquely determines $\omega$. This gives an isomorphism between $\mathbb P ( Z_{w_1,w_2}) $ and $C^{(w_1)} \times C^{(w_2)} \times (C/\tau)^{g-1-w_1-w_2}$. Under this interpretation, the map $\pi' \circ  pr'_{w_1,w_2}$ is equal to the map $\pi_{w_1,w_2}:  C^{(w_1)} \times C^{(w_2)} \times (C/\tau)^{g-1-w_1-w_2} \to J$ that sends $(D_1,D_2, D_3) $ to $ [D_1+2D_2] $. Furthermore, under this interpretation, the map to $\mathbb P ( H^0(C, K_c)) = \mathbb P ( C/\tau, \mathcal O(g-1)) = (C/\tau)^{g-1}$ may be obtained by projecting $D_1$ and $D_2$ to $ C/\tau$ and then adding all three divisors, as the pullback of this sum to $C$ is necessarily $\div(\omega)$.

Shende and Tsimerman define a polar variety $P_L'V_{w_1,w_2} = \pi_{w_1,w_2} (\sigma^{-1} (\mathbb P(L^\vee)))$ using exactly this definition of $\pi_{w_1,w_2}$ and $\sigma$ (except that they use the letters $r$ and $s$ instead of $w_1$ and $w_2$. )

They calculated \cite[Lemma 3.22]{TS} that the cycle class of  $P_L'V_{w_1,w_2}$ equals
\[    \sum_{\substack{c+d = w_1+w_2 -i \\ c \leq w_1, d\leq w_2 }} 2^{ w_1+w_2-i}  {g-i-1 \choose c,d,g-1-w_1-w_2} 2^{w_2-d} {w_1 + w_2 -c -d \choose w_1 -c }  [\Theta_i].\]   
(In fact they use slightly different notation - to obtain their formula, substitute $r$ for $w_1$, $s$ for $w_2$, $a$ for $c$, $b$ for $d$, and $g-1-k$ for $i$.)

Because we can lift everything smoothly to characteristic zero, it does not matter here whether we do intersection theory in characteristic zero or characteristic $p$.

Now applying Theorem \ref{appendix}, we see that the multiplicity is at most \[\sum_{\substack{c+d = w_1+w_2 -i \\ c \leq w_1, d\leq w_2 }} 2^{ w_1+w_2-i}  {g-i-1 \choose c,d,g-1-w_1-w_2} 2^{w_2-d} {w_1 + w_2 -c -d \choose w_1 -c }   \] times \[  2^{i-1} ([\Theta_i], [\Theta_{g-i}] )= 2^{i-1} {g \choose i} \]
(or times $1$ if $i=0$) which is at most  \[ 2^{w_1 +w_2} { g \choose i} \sum_{\substack{c+d = w_1+w_2 -i \\ c \leq w_1, d\leq w_2 }} {g-i-1 \choose c,d,g-1-w_1-w_2} 2^{w_2-d} {w_1 + w_2 -c -d \choose w_1 -c },\] where we ignore the factor of $2^{-1}$ in the case $i>0$ for simplicity.

\end{proof}

\begin{lemma}\label{global-polar-bound} For $x \in J$, the sum for $i$ from $0$ to $g-1$ of the $i$th polar multiplicity of $CC ( \Sigma^{a,b}_* \mathbb Q_\ell[2g-a-b] )$ at $x$ is at most $28^g/16$. \end{lemma}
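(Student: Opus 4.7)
The plan is to combine the explicit form of the characteristic cycle from Lemma~\ref{ts-singular-cc} with the bound on individual polar multiplicities from Lemma~\ref{individual-polar-bound}, and bound the resulting weighted sum by recognizing it as a coefficient extraction from an explicit generating polynomial.

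First, I would observe that the zero-section summand $m'_{a,b}[A]$ of $CC(\Sigma^{a,b}_*\mathbb Q_\ell[2g-a-b])$ contributes zero to the $i$th polar multiplicity at $x$ for every $i<g$, since the projectivization of the zero section is empty (see Definition~\ref{polar-multiplicity-2}). Using that the intersection pairing defining polar multiplicities is linear in the cycle argument, together with the triangle inequality, we obtain
\[
\sum_{i=0}^{g-1}\gamma^i_{CC(\Sigma^{a,b}_*\mathbb Q_\ell[2g-a-b])}(x) \;\leq\; \sum_{i=0}^{g-1}\sum_{w_1+w_2<g} |m'_{w_1,w_2,a,b}|\cdot\gamma^i_{pr_{w_1,w_2*}[Z_{w_1,w_2}]}(x).
\]

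Next, I would bound each $|m'_{w_1,w_2,a,b}|$ by the coefficient of $v_1^{g-a}v_2^{g-b}$ in the polynomial obtained from Lemma~\ref{ts-singular-cc} by replacing the alternating factor $(1-v_1^{-2})(1-v_2^{-2})$ with $(1+v_1^{-2})(1+v_2^{-2})$; this replacement dominates coefficient-by-coefficient because the remaining factors of the generating polynomial have non-negative coefficients in $v_1, v_2$. Inserting this together with the explicit bound from Lemma~\ref{individual-polar-bound} reduces matters to bounding a purely combinatorial quadruple sum in $i, w_1, w_2, c, d$.

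Third, I would rewrite that combinatorial sum as a single coefficient extraction. The trinomial coefficients $\binom{g-i-1}{c,d,g-1-w_1-w_2}$ arise as coefficients in $(x+y+z)^{g-i-1}$, while $\binom{g}{i}$ and $\binom{w_1+w_2-c-d}{w_1-c}$ come from ordinary binomial expansions, and the powers of $2$ appear once the auxiliary variables are specialized. After executing the geometric sums in $w_1, w_2$ and collecting, the bound takes the form of the coefficient of $v_1^{g-a}v_2^{g-b}$ in a product $F(v_1,v_2)^{g-1}\cdot G(v_1,v_2)$ for explicit polynomials $F$ and $G$ with non-negative coefficients, and hence is at most the value $F(v_1,v_2)^{g-1}\cdot G(v_1,v_2)$ at any pair of positive reals.

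The main obstacle is the final numerical check: producing a single substitution $(v_1, v_2)$ making this evaluation at most $28^g/16$, uniformly in $a$ and $b$. This requires careful bookkeeping of the factors $2^{w_1+w_2}$, $2^{w_2-d}$, and $(1+v_1^{-2})(1+v_2^{-2})$ coming from the various bounds, and then identifying the jointly optimal substitution. I expect the constant $28$ to emerge from an explicit evaluation of the base polynomial $1+v_1^2+2v_1v_2+v_2^2+v_1^2v_2^2$ together with the corrections, while the prefactor $1/16$ reflects that the $g$th (zero-section) polar multiplicity has been stripped off and the denominator contributions of $(1+v_i^{-2})$ at the optimizing point.
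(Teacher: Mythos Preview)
Your overall strategy---drop the zero-section, invoke linearity of polar multiplicities, combine Lemma~\ref{ts-singular-cc} with Lemma~\ref{individual-polar-bound}, and reduce to a combinatorial sum---matches the paper. Where you diverge is in how you handle $m'_{w_1,w_2,a,b}$ and in how you organize the resulting sum, and this is exactly where your proposal becomes a sketch rather than a proof.

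The paper does \emph{not} keep the coefficient extraction in $v_1,v_2$. Instead it immediately throws away all dependence on $a,b$ by bounding $m'_{w_1,w_2,a,b}$ above by the value of the generating polynomial (without the $(1-v_1^{-2})(1-v_2^{-2})$ prefactor) at $v_1=v_2=1$, namely $4^{w_1}6^{g-1-w_1-w_2}$. It then rewrites the trinomial $\binom{g-i-1}{c,d,g-1-w_1-w_2}$ as $\binom{w_1+w_2-i}{d}\binom{g-i-1}{w_1+w_2-i}$ and recognizes the inner sum over $c,d$ as the coefficient of $u^{w_2}$ in $(1+2u)^i(1+u)^{w_1+w_2-i}$. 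Summing over $w_2$ with $w=w_1+w_2$ fixed is then just evaluation at $u=1$, after which the sums over $w$ and $i$ are two clean binomial expansions yielding $\binom{g}{i}12^i\cdot 16^{g-i-1}$ and finally $(12+16)^g/16=28^g/16$.

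Your plan of packaging everything as the coefficient of $v_1^{g-a}v_2^{g-b}$ in some $F(v_1,v_2)^{g-1}G(v_1,v_2)$ is more ambitious, and you correctly flag the final numerical check as the main obstacle---but you never carry it out. It is not at all clear that the polar-multiplicity bound from Lemma~\ref{individual-polar-bound}, which involves $\binom{g}{i}$ and $2^{w_2-d}$ in a way that is asymmetric in $w_1,w_2$, interacts with the $m'$ generating function to produce a clean product of the form you assert; the ``geometric sums in $w_1,w_2$'' you allude to are not geometric in any obvious sense once the trinomial and binomial factors are present. The paper's cruder bound on $m'$ is precisely what makes the subsequent sums tractable, and the auxiliary variable $u$ is the device that replaces your hoped-for $F^{g-1}G$ structure. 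Without either that device or an explicit verification of your claimed product form, your argument has a genuine gap at its final and decisive step.
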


\begin{proof} 
In the formula of Lemma \ref{individual-polar-bound} note that, because  $c+d= w_1+w_2-i$, we have \[  {g-i-1 \choose c,d,g-1-w_1-w_2} = {w_1+w_2 -i  \choose d} {g-i-1 \choose w_1 + w_2 -i} \] and \[ \sum_{\substack{c+d = w_1+w_2 -i \\ c \leq w_1, d\leq w_2 }}{w_1+w_2 -i  \choose d}2^{w_2-d} {w_1 + w_2 -c -d \choose w_1 -c } \] is the coefficient of $u^{w_2}$ in  $(1+2u)^i  (1+u)^{w_1+w_2-i}$. 

It follows that the $i$th polar multiplicity of $pr_{w_1,w_2 *} [ Z_{w_1,w_2} ] $ at $x$  is at most the coefficient of  $u^{w_2} $ in \[(1+2u)^i  (1+u)^{w_1+w_2-i}  2^{w_1+ w_2}  {g \choose i}    {g-i-1 \choose w_1 + w_2 -i}. \]

Let us bound $m'_{w_1,w_2,a,b}$ more crudely. It is at most $m_{w_1,w_2,a,b}$, which being the coefficient of $v_1^{g-a} v_2^{g-b}$  in\[  (v_1+ v_2+ v_1^2 v_2 + v_1 v_2^2 )^{w_1} (v_1v_2)^{w_2} ( 1  + v_1^2 + 2v_1 v_2 + v_2^2 + v_1^2 v_2^2 )^{g-1-w_1 -w_2} ,\] is at most the value of that polynomial at $1$, or  $4^{w_1} 6^{g-1-w_1-w_2}$. 

So the $i$'th polar multiplicity of \[ m'_{w_1,w_2,a,b} pr_{w_1,w_2 *} [ Z_{w_1,w_2} ] \] at $x$ is at most the coefficient of $u^{w_2}$ in  \[ 6^{g-1-w_1 -w_2} (4+2u)^i  (4+u )^{w_1+w_2-i} 2^{w_1+w_2}  {g \choose i} {g-i-1 \choose w_1 + w_2 -i}. \]

Now letting $w=w_1+w_2$, we can sum over all $w_2$ with  $0 \leq w_2 \leq w$, getting that the $i$th polar multiplicity of \[ \sum_{\substack{0 \leq w_1,w_2\\ w_1+w_2 =w } }  m'_{w_1,w_2,a,b} pr_{w_1,w_2 *} [ Z_{w_1,w_2} ]\] at $x$ is at most \[ 6^{g-1-w} 6 ^i  5^{w-i} 2^w   {g \choose i}   {g-i-1 \choose w-i} =  6^{g-1-w} 12 ^i  10^{w-i}    {g \choose i}   {g-i-1 \choose w-i} . \]

Now observe that \[\sum_{w=i}^{g-1} 6^{g-1-w} 6^i  10^{w-i}  {g \choose i}    {g-i-1 \choose w-i} =  12^i {g\choose i}  ( 6 + 10)^{ g-i-1} \] so we get the $i$th polar multiplicity of \[ \sum_{\substack{ 0\leq w_1,w_2 \\ w_1+w_2 \leq g} }m'_{w_1,w_2,a,b} pr_{w_1,w_2 *} [ Z_{w_1,w_2} ]\] is at most \[   {g \choose i} 12^i (16)^{g-i-1}.\] This is also a bound for the $i$th polar multiplicity of $CC  ( \Sigma^{a,b}_* \mathbb Q_\ell[2g-a-b] )$ by Lemma \ref{ts-singular-cc} and the fact that the zero-section does not contribute to the $i$th polar multiplicity for $i$ from $0$ to $g-1$. 

Now summing over $i$ from $0$ to $g-1$ and adding back in the $i=0$ term, we see that the sum from $i=0$ to $g-1$ of the polar multiplicities at $x$ is at most $ ( 12+ 16)^{g} / 16 = 28^g/ 16$.\end{proof} 

\begin{proof}[Proof of Theorem \ref{theta-betti-bound}]  By the proper base change theorem, \[ H^i (  (\Theta_{g-a} \cap L - \Theta_{g-b} )_{\overline{k}}, \mathbb Q_\ell)\] is the $i-2g+ab$th cohomology of the stalk at the point $L \in J$ of $ \Sigma^{a,b}_* \mathbb Q_\ell[2g-a-b]$. By \citep[Lemma 3.1 and Lemma 3.6]{TS}, $\Sigma^{a,b}_* \mathbb Q_\ell[2g-a-b] $ splits into a sum of its perverse homology sheaves, and ${}^p \mathcal H^i ( \Sigma^{a,b}_* \mathbb Q_\ell[2g-a-b] )$ is a constant sheaf of rank $\dim H^{g+i} (J,\mathbb Q_\ell$ for $i \neq 0$. Hence the sum of the Betti numbers of the stalk of  $ \Sigma^{a,b}_* \mathbb Q_\ell[2g-a-b] )$
at $L$ is at most the sum of the Betti numbers of the stalk of \[ {}^p \mathcal H^0 ( \Sigma^{a,b}_* \mathbb Q_\ell[2g-a-b] )\] plus the sum of the Betti numbers of $J$, and the second term is at most $4^g$.

Because the higher and lower perverse cohomology are constant, we have \[ CC \left( {}^p \mathcal H^0 ( \Sigma^{a,b}_* \mathbb Q_\ell[2g-a-b] )\right) = CC \left( \Sigma^{a,b}_* \mathbb Q_\ell[2g-a-b]  \right) - \sum_{0 \leq i \leq 2g, i\neq g} (-1)^{i+g } {2g \choose i} [ J] \] as the characteristic cycle of a constant sheaf is simply the zero section.

We apply Theorem \ref{first-Massey-intro} to $( {}^p \mathcal H^0 ( \Sigma^{a,b}_* \mathbb Q_\ell[2g-a-b] $. We get that the sum of its Betti numbers at $L$ is at most the sum of its polar multiplicities. All the polar multiplicities except the $g$th one match $CC \left( \Sigma^{a,b}_* \mathbb Q_\ell[2g-a-b]  \right) $, and thus their sum is bounded by $28^g/16$ by Lemma \ref{global-polar-bound}. The $g$th polar multiplicity is simply the multiplicity of the zero section, which is bounded by $4 \cdot 8^g + 4^g $ by Lemma \ref{ts-singular-cc} and the preceding formula.

So in total, the sum of the Betti numbers of $(\Theta_{g-a} \cap L - \Theta_{g-b} )_{\overline{k}}$ is bounded by $28^g/16 + 4 \cdot 8^g + 2\cdot 4^g$, as stated.

 \end{proof}

\appendix

\section[Bounding Multiplicity in Jacobians of Hyperelliptic curves]{ Bounding Multiplicity in Jacobians of Hyperelliptic Curves\\ \vspace{5pt} Jacob Tsimerman}

The purpose of this appendix is provide an upper bound for the multiplicity of a subvariety of the Jacobian of a curve in terms of intersection theory. Of particular importance to
us is that the method works in any characteristic. We therefore work over an arbitrary algebraically closed field $k$.

Let $C/k$ be a curve of genus $g$ and gonality $r$ so that there is a map $\pi:C\ra \PP^1$ of degree $r$. We call a point $P\in C(k)$ \emph{ordinary}
if $\pi$ is not ramified over $\pi(P)$. We similarly call an effective divisor $D=\sum_i P_i$ ordinary if all the $P_i$ are ordinary and the sets $\pi^{-1}(\pi(P_i))$ are all distinct. 
We set  $J^{(k)}$ to be the degree $k$ component of the Jacobian, and $\Theta_k\subset J^{(k)}$ the image of $\Sym^kC$. By translating we 
may non-canonically identify all the $J^{(k)}$, and thus canonically identify their Chow groups, which we do.

Now, consider the maps $\psi_k:\Sym^k C \ra \Sym^k\PP^1 \cong \PP^k, \phi_k:\Sym^k \ra J^{(k)}$, where the map $\psi_k$ is induced by $\pi$. Note that $\psi_k$ is finite, flat of degree 
$r^k$ and $\phi_k$ is proper. Thus we have corresponding maps on Chow groups \[CH^j(\PP^k)\xra{\psi_k^*} CH^j(\Sym^k C)\xra{\phi_{k*}} CH^j(J^{(k)}).\] 

Our goal is to prove the following

\begin{theorem}\label{appendix}

Let $V\subset J^{(g)}$ be a subvariety of codimension $j<g$. Then the multiplicity of $V$ at any point $v\in V(k)$ is bounded above by $r^{g-1-j} ([V],[\Theta_j])$. 

\end{theorem}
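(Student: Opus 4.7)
The plan is to construct, for the given point $v \in V(k)$, an effective cycle $W \subset J^{(g)}$ of codimension $g-j$ containing $v$, with cohomology class $[W] = r^{g-1-j}[\Theta_j]$, and such that the scheme-theoretic intersection $V \cap W$ is zero-dimensional near $v$. Once such a $W$ is in hand, the chain of inequalities
\[
\operatorname{mult}_v V \;\leq\; \operatorname{mult}_v V \cdot \operatorname{mult}_v W \;\leq\; (V \cdot W)_v \;\leq\; [V] \cdot [W] \;=\; r^{g-1-j}\,([V],[\Theta_j])
\]
closes the argument. The first inequality uses $v \in W$, so $\operatorname{mult}_v W \geq 1$; the second is the standard lower bound on intersection multiplicity at an isolated proper intersection point (Fulton, Example~12.4.5); and the third holds because every local contribution of $[V]\cdot[W]$ is nonnegative.

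To construct $W$, pick an ordinary effective divisor $D_0 = P_1 + \cdots + P_g$ with $[D_0] = v$, available for generic $v$ by density of ordinary divisors in $\Sym^g C$. Distinguish the index $j+1$, and for $i = 1, \ldots, g-1-j$ set $q_i := \pi(P_{j+1+i}) \in \mathbb P^1$. Ordinarity of $D_0$ guarantees that each fiber $\pi^{-1}(q_i) = \{y_{i,1}, \ldots, y_{i,r}\}$ consists of $r$ distinct points, and one may label them so that $y_{i,1} = P_{j+1+i}$. Define
\[
W \;:=\; \sum_{\vec{s} \in \{1,\ldots,r\}^{g-1-j}} \Bigl([P_{j+1} + y_{1,s_1} + \cdots + y_{g-1-j,s_{g-1-j}}] + \Theta_j\Bigr),
\]
a formal sum of $r^{g-1-j}$ translates of $\Theta_j$ inside $J^{(g)}$. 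By translation-invariance of cohomology classes on the abelian variety $J^{(g)}$, one has $[W] = r^{g-1-j}[\Theta_j]$. The summand with $\vec{s} = (1,\ldots,1)$, namely $[P_{j+1} + P_{j+2} + \cdots + P_g] + \Theta_j$, contains $v = [D_0]$ via $[P_1 + \cdots + P_j] \in \Theta_j$, so $v \in W$.

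The remaining step is verifying properness of $V \cap W$ at $v$, i.e.~that no translate summand of $W$ contains a positive-dimensional component of $V$ through $v$. This is a generic condition, and the construction carries ample freedom to arrange it---one may vary $D_0$ within the linear system $\phi_g^{-1}(v) = |D_0|$, choose among the $\binom{g}{j}$ partitions of its support into a ``free'' $j$-subset, a pivot, and the $g-1-j$ remaining points defining the $q_i$, or relabel preimages within each $\pi^{-1}(q_i)$. I expect the principal technical obstacle to be the case where no ordinary divisor lies in the entire linear system $|D_0|$, which can happen for very special $v$. In that case, I would spread $v$ into an algebraic family whose generic member admits an ordinary representative and invoke a flatness argument on the resulting family of cycles to transport the bound to the special fiber, or alternatively adapt the construction using scheme-theoretic preimages above the ramification of $\pi$. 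Either approach preserves both the cohomological identity $[W] = r^{g-1-j}[\Theta_j]$ and the incidence $v \in W$, which is all the final inequality chain requires.
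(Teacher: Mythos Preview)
Your overall strategy---build an effective cycle $W$ through $v$ with class $r^{g-1-j}[\Theta_j]$ meeting $V$ properly, then invoke Fulton's lower bound on intersection multiplicity---is exactly the paper's strategy. The difference is in how the cycle is produced, and that difference is where your argument breaks.

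\textbf{The main gap.} You need an ordinary effective divisor $D_0$ of degree $g$ with $[D_0]=v$. This can fail: already for $g=2$ hyperelliptic and $v$ the hyperelliptic class, every divisor in $|v|$ is of the form $P+\tau(P)$, hence never ordinary. Your proposed remedy---spread $v$ in a family and specialize---does not work, because multiplicity is \emph{upper} semicontinuous: a bound at nearby generic points does not propagate to the special point $v$, which is precisely the point you care about. The alternative suggestion of ``scheme-theoretic preimages above ramification'' is not developed, and making it precise while preserving both $[W]=r^{g-1-j}[\Theta_j]$ and proper intersection is nontrivial.

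The paper sidesteps this entirely. Rather than asking $v$ to have an ordinary representative, it picks an \emph{arbitrary} ordinary divisor $D$ of degree $g-1$, sets $x=v-[D]$, and works with $(V-x)\cap\Theta_{g-1}$ inside $\Sym^{g-1}C$ via the maps $\psi_{g-1},\phi_{g-1}$. The point is that $D$ ranges over a dense open of $\Sym^{g-1}C$, so there is genuine continuous freedom independent of $v$.

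\textbf{A secondary gap.} Your claim that $V\cap W$ can be made proper at $v$ by ``generic'' choices is not justified. When $h^0(v)=1$ the divisor $D_0$ is unique, and the remaining freedom is the finite combinatorial choice of which $j$ points go into $\Theta_j$ and which is the pivot---there is no moduli to make generic. In the paper's setup, after translating, one still gets to choose a generic linear subspace $L_0\subset\mathbb P^{g-1}$ through a fixed point, which is a positive-dimensional family; this is what guarantees isolated intersection.

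So the missing idea is: do not try to realize $v$ itself as an ordinary divisor class; instead translate $V$ by $v-[D]$ for a freely chosen ordinary $D$ of degree $g-1$, and then exploit the linear-space freedom in $\mathbb P^{g-1}$.
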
  

\begin{proof}

We first prove the following

\begin{lemma}

Let $\ell_j$ be the class of a linear subspace of dimension $j$ in $\PP^k$. Then $\phi_{k*}\circ\psi_k^*L_j = r^{k-j} [\Theta_j]$. 

\end{lemma}

\begin{proof}

Let $D$ be an effective ordinary divisor of degree $k-j$, and let $L\subset\PP^k$ denote
$\psi_k(D+\Sym^j C)$. It is easy to see that $L$ is a linear subspace. Now $\psi_k^*L$ consists of the union of $D'+\Sym^jC$ where $D'$ varies over the $r^{k-j}$ divisors
consisting of  sums of $k-j$ points, including exactly one element form each set $\pi^{-1}(\pi(P_i))$. Moreover, since the map $\psi_k$ is etale over a generic point
of $L$, there is no generic multiplicity. Thus, $\psi_k^*\ell_j = r^{k-j} [D+\Sym^j C]$. 

Next, note that the scheme-theoretic image of $D+\Sym^jC$ under $\phi_k$ is  $(D) + \Theta_j$. Moreover, the restriction of $\phi_k$  is birational onto its image, and thus
$\phi_{k*}[D+\Sym^jC] = [\Theta_j]$, from which the proof follows.

\end{proof}

We now prove Theorem \ref{appendix}. First, we pick a translate $x+\Theta_{g-1}$ such that $v-x=(D)\in\Theta_{g-1}$ where $D$ is an ordinary divisor, and the dimension of $(V-x)\cap \Theta_{g-1}$ is $j-1$.

Next define $W=(V-x)\cap\Theta_{g-1}$, and set $W'$ to be the irreducible component of $\phi_k^* W$ containing $\phi_k^{-1}(v-x)$, so that $W'$ has dimension $j$ and maps surjectively onto the irreducible component $W$ containing $(v-x)$. Finally, set $W'' = \psi_k(W')$. Now let $L_0\subset \PP^{g-1}$ be a linear space of codimension $j-1$ which intersects $W''$ in isolated points and passes through 
$\psi_k\circ\phi_k^{-1}(v-x)$. 
Then $\psi_{k*}\circ\psi_k^*L_0$ intersects $W$ in isolated points and passes through $v-x$, and therefore $x+\psi_{k*}\circ\psi_k^*L_0$ intersects $V$ at isolated points and passes
through $v$. The theorem now follows as in \citep[Proposition 3.25]{TS} since the contribution to the intersection is positive at all points, and the intersection multiplicity at $v$ is bounded below by the multiplicity of $V$ at $v$.

\end{proof}

 \bibliographystyle{plainnat}

\bibliography{references}

\end{document}